\newtheoremstyle{myremark}     {10pt}{10pt}{}{}{\bfseries}{.}{.5em}{}
\newtheorem{thm}{Theorem}[section]
\newtheorem{cor}[thm]{Corollary}
\newtheorem{lem}[thm]{Lemma}
\newtheorem{pro}[thm]{Proposition}
\theoremstyle{definition}
\newtheorem{defn}[thm]{Definition}
\newtheorem{exmp}[thm]{Example}
\theoremstyle{myremark}
\newtheorem{rem}[thm]{Remark}
\numberwithin{equation}{section}
\begin{document}

\title[Approximations related to Tempered stable Distributions]{Approximations related to Tempered stable Distributions}

\author[Barman]{Kalyan Barman}
\address{\hskip-\parindent
Kalyan Barman, Department of Mathematics, IIT Bombay,
Powai - 400076, India.}

\email{barmankalyan@math.iitb.ac.in}

\author[Upadhye]{Neelesh S Upadhye}

\address{\hskip-\parindent
Neelesh S Upadhye, Department of Mathematics, IIT Madras,
Chennai - 600036, India.}
\email{neelesh@iitm.ac.in}

\author[Vellaisamy]{Palaniappan Vellaisamy}
\address{\hskip-\parindent
	Palaniappan Vellaisamy, Department of Statistics and Applied Probability, UC Santa Barbara, Santa Barbara, CA, 93106, USA}
\email{pvellais@ucsb.edu}
\subjclass[2020]{Primary: 62E17, 62E20;  Secondary: 60E05, 60E07.}

\keywords{Probability approximations, Tempered stable distributions, Stable distributions, Stein's method, Characteristic function approach.}

\begin{abstract}
In this article,  we first obtain, for the Kolmogorov distance, an error bound between a tempered stable and a compound Poisson distribution (CPD) and also an error bound between a tempered stable and an $\alpha$-stable distribution via Stein's method. For the smooth Wasserstein distance, an error bound between two tempered stable distributions (TSD) is also derived. As examples, we discuss the approximation of a TSD to normal and variance-gamma distributions (VGD). As corollaries, the corresponding limit theorem follows.
\end{abstract}

\maketitle

\section{Introduction}\label{Sec:Introduction}

\noindent	
Probability approximations is one of the fundamental topics in probability theory, due to its wide range of applications in limit theorems \cite{cekana,k1,vellai00}, runs \cite{k7}, stochastic algorithms \cite{wang}, and various other fields. They mainly provide estimates of the distance between the distributions of two random variables (rvs), which measure closeness of the approximations. Hence, estimating the accuracy of the approximation is a crucial task. Recently, Chen {\it et al.} \cite{k16,k17}, Jin {\it et al.} \cite{Jin}, Upadhye and Barman \cite{k1}, Xu \cite{k20} have studied stable approximations via the Stein's method. The distributional approximations for a family of stable distributions is not straightforward, due to the lack of symmetry and heavy-tailed behavior of stable distributions. One of the major obstacles is that the moments of a stable distribution do not exist, whenever the stability parameter $\alpha \in (0,1]$. To overcome these issues, different approaches and various assumptions are used.

\vskip 2ex
\noindent
Koponen \cite{koponen} first introduced tempered stable distributions (TSD)  by tempering the tail of the stable (also called $\alpha$-stable) distributions and making the distribution's tail lighter. The tails of the TSD are heavier than the normal distribution and thinner than the $\alpha$-stable distribution, see \cite{kim}. Therefore, quantifying the error in approximating  $\alpha$-stable and normal distributions to a TSD is of interest. A TSD has mean, variance and exponential moments. Also, the class of TSD includes many well-known sub-families of probability distributions, such as CGMY, KoBol, bilateral-gamma, and variance-gamma distributions, which have applications in several disciplines including financial mathematics, see \cite{k14,k15,k9,k10}.

\vskip 2ex
\noindent
In this article,  we first obtain, for the Kolmogorov distance, an error bound between tempered stable and compound Poisson distributions (CPD). This provides a convergence rate for the tempered stable approximation to a CPD. Next, we obtain the error bounds between tempered stable and $\alpha$-stable distributions via Stein's method. We obtain also the error bounds between two TSD's, for smooth Wasserstein distance. As a consequence, we discuss the normal and variance-gamma approximation and the corresponding limit theorems to a TSD.

\vskip 2ex
\noindent
The organization of this article is as follows. In Section \ref{pre}, we discuss some notations and preliminaries that will be useful later. First, we discuss some important properties of TSD and some special and limiting distributions from the TSD family. A brief discussion on Stein's method is also presented. In Section \ref{SMTSD}, we establish a Stein identity and a Stein equation for TSD and solve it via the semigroup approach. The properties of the solution to the Stein equation are discussed. In Section \ref{BFTSD}, we discuss bounds for tempered stable approximations for various probability distributions.  

\section{The preliminary results}\label{pre}
\subsection{Properties of tempered stable distributions}\label{TPOTST}
 We first define the TSD and discuss some of their the properties. Let $\textbf{I}_B(.)$ denotes the indicator function of the set $B$.
A rv $X$ is said to have TSD (see \cite[p.2]{k13}) if its cf is given by 
\begin{equation}\label{e1}
\phi_{ts}(z)=\exp\left(\int_{\mathbb{R}}(e^{izu}-1)\nu_{ts}(du)  \right),~~z\in\mathbb{R},
\end{equation}
where the L\'evy measure $\nu_{ts}$ is 
\begin{equation}\label{e2}
\nu_{ts}(du)=\left(\frac{m_1  }{u^{1+\alpha_1}}e^{-\lambda _1 u}\mathbf{I}_{(0,\infty)}(u)+\frac{m_2 }{|u|^{1+\alpha_2}}e^{-\lambda_2|u|}\mathbf{I}_{(-\infty,0)}(u)\right)du,
\end{equation}
with parameters $m_i,\lambda_i\in(0,\infty)$ and $\alpha_i\in[0,1)$, for $i=1,2$, and we denote it by $\text{TSD} (m_1,\alpha_1,\lambda_1,m_2,\alpha_2,\lambda_2)$. Note that TSD are infinitely divisible and self-decomposable, see \cite{k13}. Also, note that, if $\alpha_1=\alpha_2=\alpha \in (0,1),$ the L\'evy measure in \eqref{e2} can be seen as
\begin{align}
\nu_{ts}(du)= q(u)\nu_\alpha (du),
\end{align}
\noindent
where 
\begin{align}\label{levyst0}
\nu_\alpha(du)= \left(\frac{m_1}{u^{1+\alpha}}\mathbf{I}_{(0,\infty)}+ \frac{m_2}{u^{1+\alpha}}\mathbf{I}_{(-\infty,0)} \right)du
\end{align}
\noindent
is the L\'evy measure of a $\alpha$-stable distribution (see \cite{sato}) and $q:\mathbb{{R}} \to \mathbb{{R}}_{+}$ is a tempering function  (see \cite{k13}), given by
\begin{align}
q(u)=\bigg( e^{-\lambda _1 u}\mathbf{I}_{(0,\infty)}(u)+e^{-\lambda_2|u|}\mathbf{I}_{(-\infty,0)}(u)  \bigg).
\end{align}
\noindent
The following special and limiting cases of TSD are well known, see \cite{k13}. 

\noindent
Let $\overset{L}{\to}$ denote the convergence in distribution. Also let $m_i,\lambda_i\in(0,\infty)$ and $\alpha_i\in[0,1)$, for $i=1,2$.
\begin{itemize}
	\item [(i)] When $\alpha_1=\alpha_2=\alpha$, then $\text{TSD} (m_1,\alpha,\lambda_1,m_2,\alpha,\lambda_2)$ has KoBol distributions.
	\item [(ii)] When $m_1=m_2=m$ and  $\alpha_1=\alpha_2=\alpha$, then $\text{TSD} (m,\alpha,\lambda_1,m,\alpha,\lambda_2)$ has CGMY distributions.
	\item [(iii)] When $\alpha_1=\alpha_2=0$, then $\text{TSD} (m_1,0,\lambda_1,m_2,0,\lambda_2)$ has bilateral-gamma distributions (BGD), denoted by BGD$(m_1,\lambda_1,m_2,\lambda_2)$.
	
	
	\item [(iv)] When $m_1=m_2=m$ and $\alpha_1=\alpha_2=0$, then $\text{TSD} (m,0,\lambda_1,m,0,\lambda_2)$ has variance-gamma distributions (VGD), denoted by VGD$(m,\lambda_1,\lambda_2)$.
	
	\item [(v)] When $m_1=m_2=m$, $\lambda_1=\lambda_2=\lambda$ and $\alpha_1=\alpha_2=0$, then $\text{TSD} (m,0,\lambda,m,0,\lambda)$ has symmetric VGD, denoted by SVGD$(m,\lambda)$.
	
	\item [(vi)] When $\lambda_1,\lambda_2 \downarrow 0,$ then $\text{TSD} (m_1,\alpha,\lambda_1,m_2,\alpha,\lambda_2)$ converges to an $\alpha$-stable distribution, denoted by $S (m_1,m_2,\alpha)$, with cf
	\begin{equation}\label{0e1}
	\phi_{\alpha}(z)=\exp\left(\int_{\mathbb{R}}(e^{izu}-1)\nu_{\alpha}(du)  \right),~~z\in\mathbb{R},
	\end{equation}
	\noindent
	where $\nu_{\alpha}$ is the L\'evy measure given in \eqref{levyst0}.

	\item [(vii)] The limiting case as $m \to \infty $, the SVGD$(m,\sqrt{2m}/\lambda)$ has the normal $\mathcal{N}(0,\lambda^2)$ distribution.
\end{itemize}

\noindent
Let $X\sim \text{TSD}(m_1,\alpha_1,\lambda_1,m_2,\alpha_2,\lambda_2)$. Then from \eqref{e1}, for $z\in \mathbb{{R}}$, the cumulant generating function is given by
\begin{align}
\Psi (z)=\log \mathbb{E}(e^{izX})= \log \phi_{ts} (z),
\end{align}
\noindent
where $\phi(z)$ is given in \eqref{e1}. Then the $n$-th cumulant of $X$ is
\begin{align}\label{cumulfor}
C_n(X)&:=(-i)^n \bigg[\frac{d^n}{dz^n}\Psi (z)\bigg]_{z=0}=\displaystyle\int_{\mathbb{R}} u^n \nu_{ts} (du)<\infty,~n\geq 1.
\end{align}
\noindent
In particular (see \cite{k13}), 
\begin{align}
C_1(X)&=\mathbb{E}(X)=\Gamma (1- \alpha_1) \frac{m_1}{\lambda_1^{1-\alpha_1}}-\Gamma (1- \alpha_2) \frac{m_2}{\lambda_2^{1-\alpha_2}},\\
C_2(X)&=Var(X)=\Gamma (2- \alpha_1) \frac{m_1}{\lambda_1^{2-\alpha_1}}+\Gamma (2- \alpha_2) \frac{m_2}{\lambda_2^{2-\alpha_2}},\\
\text{and }C_3(X)&=\Gamma (3- \alpha_1) \frac{m_1}{\lambda_1^{3-\alpha_1}}-\Gamma (3- \alpha_2) \frac{m_2}{\lambda_2^{3-\alpha_2}}.
\end{align}
\subsection{Key steps of Stein's method}
 Let $f^{(n)}$ henceforth denotes the $n$-th derivative of $f$ with $f^{(0)}=f$ and $f^{\prime}=f^{(1)}$. Let $\mathcal{S}(\mathbb{R})$ be the Schwartz space defined by
\begin{align}\label{schspace}
\mathcal{S}(\mathbb{R}):=\left\{f\in C^\infty(\mathbb{R}): \lim_{|x|\rightarrow \infty} |x^mf^{(n)}(x)|=0, \text{ for all } m,n\in \mathbb{N}_0\right\},
\end{align}
\noindent
where $\mathbb{N}_0=\mathbb{N}\cup \{ 0\}$ and $C^\infty(\mathbb{R})$ is the class of infinitely differentiable functions on $\mathbb{R}$. Note that the Fourier transform (FT) on $\mathcal{S}(\mathbb{R})$ is automorphism. In particular, if $f \in \mathcal{S}(\mathbb{R})$, and $\widehat{f}(u):=\int_{\mathbb{R}}e^{-iux}f(x)dx,~~\text{  }u\in\mathbb{R},$ then $\widehat{f}(u)\in \mathcal{S}(\mathbb{R}).$ Similarly, if $\widehat{f}(u)\in \mathcal{S}(\mathbb{R})$, and $f(x):=\frac{1}{2\pi}\int_{\mathbb{R}}e^{iux}\widehat{f}(u)du,~~\text{  }x\in\mathbb{R},$
then $f(x)\in \mathcal{S}(\mathbb{R})$, see \cite{stein}.

\vskip 2ex
\noindent
Next, let
\begin{align}\mathcal{H}_r = \{h:\mathbb{R}\to \mathbb{R} | h \mbox{ is $r$ times differentiable and},\|h^{(k)}\|\leq 1, k =0, 1,\ldots, r \},\end{align} where $\|h\|=\sup_{x\in\mathbb{R}}|h(x)|$. Then, for any two rvs $Y$ and $Z$, the smooth Wasserstein distance (see \cite{rg0}) is given by
\begin{equation}\label{SWD00}
d_{\mathcal{H}_r}(Y,Z):=\sup_{h \in \mathcal{H}_r}\left|\mathbb{E}[h(Y)]-\mathbb{E}[h(Z)]\right|,~r\geq 1.
\end{equation}
\noindent
 Also, let  
 \begin{align}
 \mathcal{H}_W = \{h:\mathbb{R}\to \mathbb{R} | h \mbox{ is $1$-Lipschitz and},\|h\|\leq 1 \}.
 \end{align}
 \noindent
 Then, for any two rvs $Y$ and $Z$, the classical Wasserstein distance (see \cite{rg0}) is given by
 \begin{equation}
 d_{W}(Y,Z):=\sup_{h \in \mathcal{H}_W}\left|\mathbb{E}[h(Y)]-\mathbb{E}[h(Z)]\right|.
 \end{equation}
\noindent
Finally, let
\begin{align}
	\mathcal{H}_K &= \left\{h:\mathbb{R}\to \mathbb{R}~\big|~h=\mathbf{1}_{(-\infty,x]},~x\in \mathbb{{R}} \right\}.
\end{align}
\noindent
Then, for any two rvs $Y$ and $Z$, the Kolmogorov distance (see \cite{rg0}) is given by
\begin{equation}
d_{K}(Y,Z):=\sup_{h \in \mathcal{H}_K}\left|\mathbb{E}[h(Y)]-\mathbb{E}[h(Z)]\right|.
\end{equation}


\vskip 2ex
\noindent
Next, we discuss the steps of Stein's method. The method is based on the simple fact that, any real-valued random variable (rv) $Z$ has probability distribution $F_Z$ (denoted by $Z\sim F_Z$), if and only if 
\begin{equation*}
\mathbb{E}\left(\mathcal{A}f(Z) \right)=0,
\end{equation*}

\noindent
where $f \in \mathcal{F}$, a class suitable functions. This equivalence is called Stein characterization of $F_Z$. This characterization leads us to the Stein equation
\begin{equation}\label{normal2}
Af(x)=h(x)-\mathbb{E}[h(Z)],
\end{equation}
\noindent
where $h$ is a real-valued test function. Replacing $x$ with a rv $Y$ and taking expectations on both sides of  (\ref{normal2}) gives
\begin{equation}\label{normal3}
\mathbb{E}\left[\mathcal{A} f (Y)\right]=\mathbb{E}h(Y)-\mathbb{E}h(Z).
\end{equation}

\noindent 
This equality (\ref{normal3}) plays a crucial role in Stein's method. The probability distribution $F_Z$ is characterized by (\ref{normal2}) such that the problem of bounding the quantity $|\mathbb{E}h(Y)-\mathbb{E}h(Z)|$ depends on smoothness of the solution to (\ref{normal2}), and behavior of $Y$. For more details on Stein's method, we refer to the reader \cite{k0,appmethod} and the references therein.
 
 \vskip 2ex
\noindent 
In particular, let $Z$ has normal $\mathcal{N}(0,\sigma^2)$ distribution. Then a Stein characterization for $Z$ (see \cite{k2}) is  
 \begin{equation}\label{normal1}
 \mathbb{E}\left(\sigma^{2}f^{\prime}(Z)-Zf(Z) \right)=0,
 \end{equation}
 
 \noindent
 where $f$ is any real-valued absolutely continuous function such that $\mathbb{E}|f^{\prime}(Z)|<\infty$. This characterization leads us to the Stein equation
 \begin{equation}\label{normal02}
 \sigma^{2}f^{\prime}(x)-xf(x)=h(x)-\mathbb{E}h(Z),
 \end{equation}
 \noindent
 where $h$ is a real-valued test function. Replacing $x$ with a rv $Z_n \sim \mathcal{N}(0, \sigma^{2}_{n})$ and taking expectations on both sides of  (\ref{normal02}) gives
 \begin{equation}\label{normal03}
 \mathbb{E}\left(\sigma^{2}f^{\prime}(Z_n)-Z_nf(Z_n) \right)=\mathbb{E}h(Z_n)-\mathbb{E}h(Z).
 \end{equation}
 
 \noindent
 Using the smoothness of solution to (\ref{normal02}), it can be shown (see \cite[Section 3.6]{nourdin}) that
 \begin{align}\label{dbtwn}
 	d_W(Z_n,Z) \leq \frac{\sqrt{2/\pi}}{\sigma^2 \vee \sigma^{2}_{n}} |\sigma^{2}_{n}-\sigma^{2}|.
 \end{align}
 \noindent
  From \eqref{dbtwn}, if $\sigma_n \to \sigma$, then $d_W(Z_n,Z)=0,$ as expected, which implies that $Z_n$ converges to a normal $\mathcal{N}(0,\sigma^2)$ distribution. We refer the reader to \cite{amit0} and \cite{ley} for a number of similar bounds as \eqref{dbtwn} for comparison of univariate probability distributions.



\section{Stein's method for tempered stable distributions}\label{SMTSD}

\subsection{A Stein identity for tempered stable distributions}
In this section, we obtain a Stein identity for a TSD. First recall that $\mathcal{S}(\mathbb{{R}})$ denotes the Schwartz space of functions, defined in \eqref{schspace}.
\begin{pro}\label{th1}
	A rv $X\sim \text{TSD} (m_1,\alpha_1,\lambda_1,m_2,\alpha_2,\lambda_2)$, if and only if
	\begin{equation}\label{PP2:StenIdTSD}
	\mathbb{E}\left(Xf(X)-\displaystyle\int_{\mathbb{R}}uf(X+u)\nu_{ts}(du) \right)=0,~~f\in\mathcal{S}(\mathbb{R}),  
	\end{equation}
	 where $\nu_{ts}$ is the associated L\'evy measure of TSD, defined in \eqref{e2}.	
\end{pro}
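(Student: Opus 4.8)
The plan is to verify the identity \eqref{PP2:StenIdTSD} first on the family of exponential functions $f_z(x)=e^{izx}$, $z\in\mathbb{R}$, where everything collapses to a short computation with the characteristic function, and then to bootstrap to an arbitrary $f\in\mathcal{S}(\mathbb{R})$ by Fourier inversion. Throughout, the analytic engine is the finiteness of the first absolute moment of the L\'evy measure, $\int_{\mathbb{R}}|u|\,\nu_{ts}(du)<\infty$, which holds because $\alpha_1,\alpha_2\in[0,1)$ ensures integrability near the origin while the tempering factors $e^{-\lambda_i|u|}$ control the tails; in particular this also gives $\mathbb{E}|X|<\infty$, consistent with \eqref{cumulfor}.

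For the forward implication, suppose $X\sim\text{TSD}$, so $\phi_{ts}(z)=\mathbb{E}(e^{izX})=\exp(\Psi(z))$ with $\Psi(z)=\int_{\mathbb{R}}(e^{izu}-1)\nu_{ts}(du)$. Differentiating under the integral sign yields $\Psi'(z)=\int_{\mathbb{R}}iu\,e^{izu}\nu_{ts}(du)$ and hence $\phi_{ts}'(z)=\phi_{ts}(z)\Psi'(z)$. Reading the left-hand side as $\mathbb{E}(iXe^{izX})$ and, after a Fubini interchange, the right-hand side as $\mathbb{E}\big(\int_{\mathbb{R}}iu\,e^{iz(X+u)}\nu_{ts}(du)\big)$, cancellation of the factor $i$ gives exactly \eqref{PP2:StenIdTSD} for $f=f_z$. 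To reach a general $f\in\mathcal{S}(\mathbb{R})$, I would write $f(x)=\frac{1}{2\pi}\int_{\mathbb{R}}\widehat{f}(z)e^{izx}\,dz$, multiply the $f_z$-identity by $\widehat{f}(z)$, and integrate in $z$; since $\widehat{f}\in\mathcal{S}(\mathbb{R})$ decays rapidly and $\mathbb{E}|X|<\infty$, Fubini applies and the resulting superposition is precisely \eqref{PP2:StenIdTSD} for $f$.

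For the converse, suppose \eqref{PP2:StenIdTSD} holds for every $f\in\mathcal{S}(\mathbb{R})$. The idea is to test against Schwartz approximations of $f_z$, e.g. $f_{z,\epsilon}(x)=e^{izx}e^{-\epsilon x^2/2}$, and let $\epsilon\downarrow 0$. Writing $\phi_X(z)=\mathbb{E}(e^{izX})$, the limiting identity reads $\mathbb{E}(Xe^{izX})=\phi_X(z)\int_{\mathbb{R}}u\,e^{izu}\nu_{ts}(du)$, i.e. $\tfrac1i\phi_X'(z)=\phi_X(z)\tfrac1i\Psi'(z)$, so $\phi_X$ solves the first-order linear ODE $\phi_X'(z)=\phi_X(z)\Psi'(z)$ subject to $\phi_X(0)=1$. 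Its unique solution is $\phi_X(z)=\exp(\Psi(z))=\phi_{ts}(z)$, and uniqueness of characteristic functions then forces $X\sim\text{TSD}$.

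I expect the main obstacle to be technical rather than conceptual: making rigorous the two interchanges of expectation with $\nu_{ts}$-integration, the differentiation under the integral, and the passage $\epsilon\downarrow 0$ in the converse, where one must justify $\mathbb{E}(Xf_{z,\epsilon}(X))\to\mathbb{E}(Xe^{izX})$ and control the inner integral uniformly. A delicate point in the converse is securing $\mathbb{E}|X|<\infty$ a priori, which I would obtain by first testing with a suitable Schwartz function to produce the uniform bound $|\mathbb{E}(Xf_{z,\epsilon}(X))|\le\int_{\mathbb{R}}|u|\,\nu_{ts}(du)$ and then invoking a Fatou argument. All of these steps reduce to dominated-convergence estimates anchored on $\int_{\mathbb{R}}|u|\,\nu_{ts}(du)<\infty$ and on the boundedness of $x\mapsto xf(x)$ for $f\in\mathcal{S}(\mathbb{R})$, so the crux is assembling these bounds cleanly rather than overcoming any genuine difficulty.
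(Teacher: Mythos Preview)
Your proposal is correct and follows essentially the same approach as the paper: both directions hinge on the differentiated characteristic function relation $\phi_{ts}'(z)=i\phi_{ts}(z)\int_{\mathbb{R}}u e^{izu}\nu_{ts}(du)$, with Fourier inversion used to pass between exponential test functions and arbitrary $f\in\mathcal{S}(\mathbb{R})$, and the converse recovered by solving the resulting first-order ODE for the characteristic function. Your treatment of the converse is in fact slightly more careful than the paper's, which plugs in $f(x)=e^{isx}$ directly without noting that this is not in $\mathcal{S}(\mathbb{R})$; your Schwartz approximation $e^{izx}e^{-\epsilon x^2/2}$ closes that gap.
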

\begin{proof}
Taking logarithms on both sides of (\ref{e1}), and differentiating with respect to $z$, we have
\begin{equation}\label{PP2:e4}
\phi_{ts}^{\prime}(z)=i\phi_{ts}(z)\int_{\mathbb{R}}ue^{izu}\nu_{ts}(du).
\end{equation}

\noindent
Let $F_{X}$ be the cumulative distribution function (CDF) of $X$. Then,

\begin{equation}\label{PP2:e5}
\phi_{ts}(z)=\displaystyle\int_{\mathbb{R}}e^{izx}F_{X}(dx)~~\implies~~\phi_{ts}^{\prime}(z)=i\displaystyle\int_{\mathbb{R}}xe^{izx}F_{X}(dx).
\end{equation}
\noindent
Using \eqref{PP2:e5} in \eqref{PP2:e4} and rearranging the integrals, we have
\begin{align}
\nonumber	0&=i\displaystyle\int_{\mathbb{R}}xe^{izx}F_{X}(dx)-i\phi_{ts}(z)\int_{\mathbb{R}}ue^{izu}\nu_{ts}(du)\\
	&=\displaystyle\int_{\mathbb{R}}xe^{izx}F_{X}(dx)-\phi_{ts}(z)\int_{\mathbb{R}}ue^{izu}\nu_{ts}(du)\label{PP2:e6}
\end{align}

\noindent
The second integral of \eqref{PP2:e6} can be written as

\begin{align}
	\nonumber \phi_{ts}(z)\int_{\mathbb R}ue^{izu}\nu_{ts}(du)&=\int_{\mathbb R}\int_{\mathbb R}ue^{izu}e^{izx}F_{X}(dx)\nu_{ts}(du)\\
	\nonumber &=\int_{\mathbb R}\int_{\mathbb R}ue^{iz(u+x)}\nu_{ts}(du)F_{X}(dx)\\
	\nonumber &=\int_{\mathbb R}\int_{\mathbb R}ue^{izy}\nu_{ts}(du)F_{X}(d(y-u))\\
	\nonumber &=\int_{\mathbb R}\int_{\mathbb R}ue^{izx}\nu_{ts}(du)F_{X}(d(x-u))\\ 
	&=\int_{\mathbb R}e^{izx}\int_{\mathbb R}uF_{X}(d(x-u))\nu_{ts}(du).\label{PP2:e7}
\end{align}

\noindent
Substituting \eqref{PP2:e7} in \eqref{PP2:e6}, we have

\begin{align}
\nonumber	0&=\displaystyle\int_{\mathbb{R}}xe^{izx}F_{X}(dx)-\int_{\mathbb R}e^{izx}\int_{\mathbb R}uF_{X}(d(x-u))\nu_{ts}(du)\\
	&=\displaystyle\int_{\mathbb{R}}e^{izx} \left( xF_{X}(dx)-\int_{\mathbb R}uF_{X}(d(x-u))\nu_{ts}(du) \right) \label{PP2:e8}
\end{align}

\noindent
On applying Fourier transform to \eqref{PP2:e8}, multiplying with $f\in \mathcal{S}(\mathbb{R}),$ and integrating over $\mathbb{R},$ we get
\begin{align}\label{PP2:e9}
	\displaystyle\int_{\mathbb{R}}f(x) \left( xF_{X}(dx)-\int_{\mathbb R}uF_{X}(d(x-u))\nu_{ts}(du) \right)=0.
\end{align}

\noindent
The second integral of \eqref{PP2:e9} can be seen as

\begin{align}
	\nonumber \int_{\mathbb R}\int_{\mathbb R}uf(x)F_{X}(d(x-u))\nu_{ts}(du)&=\int_{\mathbb R}\int_{\mathbb R}uf(y+u)F_{X}(dy)\nu_{ts}(du)\\ 
	\nonumber &=\int_{\mathbb R}\int_{\mathbb R}uf(x+u)F_{X}(dx)\nu_{ts}(du)\\
	&=\mathbb{E}\left(\int_{\mathbb R}uf(X+u)\nu_{ts}(du)\right).\label{PP2:e10}
\end{align}
\noindent
Substituting \eqref{PP2:e10} in \eqref{PP2:e9}, we have
\begin{align*}
	\mathbb{E}\left(Xf(X)-\displaystyle\int_{\mathbb{R}}uf(X+u)\nu_{ts}(du) \right)=0,
\end{align*}
\noindent
which proves \eqref{PP2:StenIdTSD}.
\vfill
\noindent
Assume conversely, \eqref{PP2:StenIdTSD} holds for $\nu_{ts}$ defined in \eqref{e2}. For any $s \in \mathbb{R}$, let $f(x)=e^{isx}$, $x\in \mathbb{R}$, then (\ref{PP2:StenIdTSD}) becomes
\begin{align*}
\mathbb{E}Xe^{isX} &= \mathbb{E}\int_{\mathbb R}e^{is(X+u)}u\nu(du)\\
&=\mathbb{E}e^{isX}\int_{\mathbb R}e^{isu}u\nu(du).
\end{align*}
\noindent
Setting $\phi_{ts}(s)=\mathbb{E}e^{isX}$, then
\begin{equation}\label{suff1}
\phi_{ts}^{\prime}(s)=i\phi_{ts}(s)\int_{\mathbb R}e^{isu}u\nu(du).
\end{equation}

\noindent
Integrating out the real and imaginary parts of (\ref{suff1}) leads, for any $z\geq 0$, to
\begin{align*}
\phi_{ts}(z)&=\exp \left(i\int_{0}^{z}\int_{\mathbb R}e^{isu}u\nu(du)ds  \right)\\
&=\exp\left(i\int_{\mathbb R}\int_{0}^{z}e^{isu}dsu\nu(du)  \right)\\
&=\exp\left(\int_{\mathbb R}(e^{izu}-1)\nu(du)  \right).
\end{align*}

\noindent
A similar computation for $z\leq0$ completes the derivation of the cf. 
\end{proof}
\noindent
We now have the following Corollary for $\alpha$-stable distributions.
\begin{cor}
	A rv $X \sim S(m_1,m_2,\alpha)$, if and only if 
	\begin{equation}\label{StenIdstable}
	\mathbb{E}\left(Xf(X)-\displaystyle\int_{\mathbb{R}}uf(X+u)\nu_\alpha(du) \right)=0,~~f\in\mathcal{S}(\mathbb{R}),  
	\end{equation}
	where $\nu_{\alpha}$ is the associated L\'evy measure of an $\alpha$-stable distribution, given in \eqref{levyst0}.
\end{cor}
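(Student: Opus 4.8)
The plan is to recognize the Corollary as the exact analogue of Proposition \ref{th1} for the limiting L\'evy measure $\nu_\alpha$, obtained by setting $\lambda_1=\lambda_2=0$, $\alpha_1=\alpha_2=\alpha$ in $\nu_{ts}$ (equivalently, the limit $\lambda_i\downarrow0$ described in item (vi)). Since the $\alpha$-stable cf $\phi_\alpha$ of \eqref{0e1} has precisely the same exponential L\'evy form as $\phi_{ts}$ of \eqref{e1}, my first approach is to rerun the proof of Proposition \ref{th1} verbatim with $\nu_\alpha$ in place of $\nu_{ts}$: differentiate $\log\phi_\alpha$ to get the analogue of \eqref{PP2:e4}, namely $\phi_\alpha'(z)=i\phi_\alpha(z)\int_\mathbb{R} ue^{izu}\nu_\alpha(du)$; use $\phi_\alpha'(z)=i\int_\mathbb{R} xe^{izx}F_X(dx)$; combine and apply Fubini together with the change of variables $y=x+u$ exactly as in \eqref{PP2:e6}--\eqref{PP2:e8}; then take Fourier transforms, multiply by $f\in\mathcal{S}(\mathbb{R})$, and integrate to reach \eqref{StenIdstable}. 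The converse proceeds as before: insert $f(x)=e^{isx}$ to obtain the first-order ODE $\phi_\alpha'(s)=i\phi_\alpha(s)\int_\mathbb{R} e^{isu}u\nu_\alpha(du)$ and integrate it to recover \eqref{0e1}.

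The main obstacle is that, unlike the tempered case, $\nu_\alpha$ has no finite first moment: the exponential factor that guaranteed $\int_\mathbb{R}|u|\nu_{ts}(du)<\infty$ is gone, and in fact $\int_\mathbb{R}|u|\nu_\alpha(du)=\infty$ for $\alpha\in(0,1)$, since the tail density $u^{-\alpha}$ is not integrable at infinity. Consequently $\mathbb{E}|X|=\infty$, the cf $\phi_\alpha$ is not differentiable at the origin, and the inner integral $\int_\mathbb{R} ue^{izu}\nu_\alpha(du)$ is only conditionally convergent, and only for $z\neq0$. Thus the differentiation-under-the-integral-sign and the Fubini interchanges of \eqref{PP2:e4}--\eqref{PP2:e10}, which in the tempered setting were justified automatically by absolute integrability, no longer are.

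To resolve this I would exploit that \eqref{StenIdstable} is only ever tested against $f\in\mathcal{S}(\mathbb{R})$. For such $f$ the map $x\mapsto xf(x)$ is bounded and $u\mapsto uf(X+u)$ decays rapidly, so $Xf(X)$ is bounded and $\int_\mathbb{R} uf(X+u)\nu_\alpha(du)$ converges absolutely and is bounded in $X$; hence the Stein functional on the left of \eqref{StenIdstable} is well-defined despite the nonexistence of $\mathbb{E}|X|$. Concretely, I would carry out the manipulations of Proposition \ref{th1} on the punctured line $z\neq0$, where every integral converges, so that the tempered distribution $\mu(dx):=xF_X(dx)-\int_\mathbb{R} uF_X(d(x-u))\nu_\alpha(du)$ has $\widehat{\mu}(z)=0$ for all $z\neq0$; since $\widehat{\mu}$ is locally integrable (its singularity at the origin being of integrable order $|z|^{\alpha-1}$), it vanishes as a tempered distribution, whence pairing with $f$ via Parseval still yields \eqref{StenIdstable}. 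An equivalent and perhaps cleaner route is a dominated-convergence passage to the limit $\lambda_1,\lambda_2\downarrow0$ in \eqref{PP2:StenIdTSD}, dominating the integrand $uf(X_\lambda+u)$ against $\nu_{ts}(du)$ uniformly in $\lambda$ by a $\nu_\alpha$-integrable bound coming from $\sup_x(1+|x|)^2|f(x)|<\infty$ (the tempering factor being $\le1$), and invoking $X_\lambda\overset{L}{\to}X$ from item (vi). The converse needs the additional remark that $f(x)=e^{isx}\notin\mathcal{S}(\mathbb{R})$; I would legitimize its use by approximation by Schwartz functions (or by extending the identity to bounded $C^\infty$ functions with bounded derivatives), after which integrating the resulting ODE reproduces \eqref{0e1} and identifies $X\sim S(m_1,m_2,\alpha)$.
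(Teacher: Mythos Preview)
Your ``cleaner route'' via dominated convergence as $\lambda_1,\lambda_2\downarrow 0$ is exactly the paper's proof: it sets $\alpha_1=\alpha_2=\alpha$, takes the limit in \eqref{PP2:StenIdTSD}, and invokes the dominated convergence theorem together with $\nu_{ts}\to\nu_\alpha$. You are in fact more careful than the paper, which neither spells out the dominating function nor addresses the converse direction for the Corollary; your observations about the failure of absolute integrability, the need to exploit $f\in\mathcal{S}(\mathbb{R})$ to make the Stein functional well-defined, and the approximation argument for $f(x)=e^{isx}$ fill genuine gaps that the paper leaves implicit.
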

\begin{proof}
	Let $\alpha_1=\alpha_2=\alpha$ in Theorem \ref{th1}. Next, taking limits as $\lambda_1,\lambda_2 \downarrow 0$ in \eqref{PP2:StenIdTSD}, and then applying the dominated convergence theorem, and noting that $\nu_{ts} \to \nu_{\alpha},$ we get \eqref{StenIdstable}.
\end{proof}
\begin{rem}	 
(i) Note that we derive the characterizing (Stein) identity (\ref{PP2:StenIdTSD}) for TSD using the L\'evy-Khinchine representation of the cf. Also, observe that several classes of distributions such as variance-gamma, bilateral-gamma, CGMY, and KoBol can be viewed as TSD. Stein identities for these classes of distributions can be easily obtained using (\ref{PP2:StenIdTSD}).

\item [(ii)] Recently Arras and Houdr\'e [\cite{k0}, Theorem 3.1 and Section 5] obtained a Stein identity for infinitely divisible distributions with first finite moment via the covariance representation given in \cite{interpol}. Note that TSD is a subclass of IDD and TSD has finite mean. Hence, Stein identity for TSD can also be derived using the approach given in \cite{k0}.	
\end{rem}
\subsubsection{A non-zero bias distribution}
In the Stein's method literature, the zero bias distribution is a powerful tool to obtain bounds, which has been used in several situation. It has been used in conjunction with coupling techniques to produce quantitative results for normal and product normal approximations, see, e.g., \cite{kk3}. The zero bias distribution due to Goldstein and Reinert \cite{GR0} is as follows.
\begin{defn}
	Let X be a rv with $\mathbb{E}(X)=0$, and $Var(X)=\sigma^2<\infty$. We say that $X^*$ has $X$-zero bias distribution if
	\begin{align}
		\mathbb{E}[Xf(X)]=\sigma^2 \mathbb{E}[f^\prime (X^*)],
	\end{align}
	for any differentiable function $f$ with $\mathbb{E}(Xf(X)) < \infty$.
\end{defn}
\noindent
In the following lemma, we prove existence of a non-zero (extended) bias distribution (see \cite[Remark 3.9 (ii)]{k0}) associated with TSD. Before stating our result, let us define
\begin{align}\label{qnnot1}
\eta^{+}(u)=\int_{u}^{\infty}y \nu_{ts}(dy), ~u>0,\text{ and }\eta^{-}(u)=\int_{-\infty}^{u}(-y) \nu_{ts}(dy),~u<0,
\end{align}
\noindent
where $\nu_{ts}$ is the L\'evy measure of the TSD (see \eqref{e2}). Let
$$\eta(u):=\eta^{+}(u)+\eta^{-}(u). $$
\noindent
 Also let $Y$ be a random variable with the density
\begin{align}\label{pdfzbias}
f_{1}(u)=\frac{\eta(u)}{\int_{\mathbb{R}} y^{2} \nu_{ts} (dy)}=\frac{\eta(u)}{Var(X)}, ~u\in\mathbb{R}.
\end{align}
\noindent
Then, for $n\geq 1$, the $n$th moment of $Y$ is 
\begin{align}
\nonumber	\mathbb{E}(Y^n)&=\frac{1}{Var(X)}\displaystyle\int_{\mathbb{R}}u^n\eta(u)du\\
\nonumber	&=\frac{1}{(n+1)Var(X)}\displaystyle\int_{\mathbb{R}}u^{n+2}\nu_{ts}(u)du\\
	\label{mom0}&=\frac{C_{n+2}(X)}{(n+1)C_2 (X)}.
\end{align} 
\begin{lem}\label{theorem1}
	Let $X\sim \text{TSD}(m_1,\alpha_1,\lambda_1,m_2,\alpha_2,\lambda_2)$and $Y$ (independent of $X$) has the density given in \eqref{pdfzbias}. Then
	\begin{equation}\label{eqn1}
	\text{Cov}(X,f(X))=Var(X)\mathbb{E} \left(f^{\prime}(X+Y)\right),
	\end{equation}
	where $g$ is an absolutely continuous function with $\mathbb{E} \left(f^{\prime}(X+Y)\right)<\infty.$
\end{lem}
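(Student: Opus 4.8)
The plan is to derive \eqref{eqn1} directly from the Stein identity \eqref{PP2:StenIdTSD} by integrating by parts against the tail functions $\eta^{\pm}$. First I would record that, by Proposition \ref{th1}, for $f\in\mathcal{S}(\mathbb{R})$ we have $\mathbb{E}[Xf(X)]=\mathbb{E}\int_{\mathbb{R}}uf(X+u)\,\nu_{ts}(du)$, and then set $g(u):=\mathbb{E}[f(X+u)]$, so that by Fubini the right-hand side equals $\int_{\mathbb{R}}u\,g(u)\,\nu_{ts}(du)$ while $g'(u)=\mathbb{E}[f'(X+u)]$.

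The key observation is that, writing $\nu_{ts}(du)=\rho(u)\,du$, differentiating the definitions \eqref{qnnot1} gives $u\,\nu_{ts}(du)=-\,d\eta^{+}(u)$ on $(0,\infty)$ and $u\,\nu_{ts}(du)=-\,d\eta^{-}(u)$ on $(-\infty,0)$. I would therefore split $\int_{\mathbb{R}}u\,g(u)\,\nu_{ts}(du)$ into its contributions over $(0,\infty)$ and $(-\infty,0)$ and integrate each by parts. The boundary terms at $\pm\infty$ vanish because $\eta^{\pm}(u)\to 0$ there (a consequence of the finiteness of the first moment in \eqref{cumulfor}) while $g$ stays bounded by $\|f\|_\infty$; the boundary terms at $0$ produce $g(0)\,\eta^{+}(0^{+})=g(0)\,\mu^{+}$ and $-g(0)\,\eta^{-}(0^{-})=-g(0)\,\mu^{-}$, where $\mu^{+}=\int_{0}^{\infty}y\,\nu_{ts}(dy)$ and $\mu^{-}=\int_{-\infty}^{0}(-y)\,\nu_{ts}(dy)$. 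Since $g(0)=\mathbb{E}[f(X)]$ and $\mu^{+}-\mu^{-}=C_1(X)=\mathbb{E}[X]$, these collect into $\mathbb{E}[X]\,\mathbb{E}[f(X)]$, and the surviving integrals combine, using that $\eta=\eta^{+}$ on $(0,\infty)$ and $\eta=\eta^{-}$ on $(-\infty,0)$, into $\int_{\mathbb{R}}g'(u)\,\eta(u)\,du$. Hence $\mathbb{E}[Xf(X)]=\mathbb{E}[X]\,\mathbb{E}[f(X)]+\int_{\mathbb{R}}\mathbb{E}[f'(X+u)]\,\eta(u)\,du$.

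Rearranging yields $\text{Cov}(X,f(X))=\int_{\mathbb{R}}\mathbb{E}[f'(X+u)]\,\eta(u)\,du$, and inserting the factor $Var(X)$ turns the integral into $Var(X)\int_{\mathbb{R}}\mathbb{E}[f'(X+u)]\,f_{1}(u)\,du$ via the density \eqref{pdfzbias}. Since $Y$ has density $f_{1}$ and is independent of $X$, this last integral is exactly $\mathbb{E}[f'(X+Y)]$ by Fubini, which gives \eqref{eqn1}.

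The main obstacle will be the rigorous justification of the two interchanges of integration and of the integration by parts, i.e., confirming the decay $\eta^{\pm}(u)\to 0$ at infinity and the finiteness of $\eta^{\pm}(0^{\pm})$; both rest on the finiteness of the first and second moments recorded in \eqref{cumulfor}. A secondary point is that Proposition \ref{th1} is stated for $f\in\mathcal{S}(\mathbb{R})$, whereas the lemma allows an arbitrary absolutely continuous $f$ with $\mathbb{E}[f'(X+Y)]<\infty$; I would close this gap by a routine density argument, approximating such $f$ by Schwartz functions and passing to the limit under the moment bounds already established.
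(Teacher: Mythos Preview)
Your proposal is correct and follows essentially the same route as the paper: both start from the Stein identity \eqref{PP2:StenIdTSD} and convert $\int_{\mathbb{R}} u f(X+u)\,\nu_{ts}(du)$ into $\int_{\mathbb{R}} f'(X+v)\,\eta(v)\,dv$ via an integration-by-parts/Fubini step. The only cosmetic difference is that the paper first subtracts $f(X)$ (writing $u(f(X+u)-f(X))=\int_0^u u f'(X+v)\,dv$ and applying Fubini, so no boundary terms appear), whereas you integrate by parts against $d\eta^{\pm}$ directly and recover the mean term $\mathbb{E}[X]\,\mathbb{E}[f(X)]$ from the boundary contributions at $0$; your added remark about extending from $\mathcal{S}(\mathbb{R})$ to absolutely continuous $f$ by density is a point the paper leaves implicit.
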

\begin{proof}
 Using \eqref{cumulfor}, for the case $n=1$, in \eqref{PP2:StenIdTSD}, and rearranging the terms, we get 
$$	\text{Cov}(X,f(X))=\mathbb{E}\int_{\mathbb{R}} u(f(X+u)-f(X))\nu_{ts}(du).$$
\noindent
 Now
\begin{align}
\nonumber\text{Cov}(X,f(X))=&\mathbb{E}\int_{\mathbb{R}} u(f(X+u)-f(X))\nu_{ts}(du)\\
\nonumber=&\mathbb{E}\displaystyle\int_{0}^{\infty}f^{\prime}(X+v)\int_{v}^{\infty}u\nu_{ts}(du) dv\\
\nonumber&+\mathbb{E}\displaystyle\int_{-\infty}^{0}f^{\prime}(X+v)\int_{-\infty}^{v}(-u)\nu_{ts}(du) dv\\
\nonumber	=&\mathbb{E}\displaystyle\int_{\mathbb{R}}f^{\prime}(X+v) \left(\eta^{+}(v)\mathbf{I}_{(0,\infty)}(v)+ \eta^{-}(v)\mathbf{I}_{(-\infty,0)}(v)  \right)dv\\
\nonumber=&\left(\displaystyle\int_{\mathbb{R}} u^{2}\nu_{ts} (du) \right)\mathbb{E}\displaystyle\int_{\mathbb{R}}f^{\prime}(X+v)f_{1}(v)dv\\
\nonumber=&\left(\displaystyle\int_{\mathbb{R}} u^{2}\nu_{ts} (du) \right) \mathbb{E} \left(f^{\prime}(X+Y)\right)\\
 \nonumber=&Var(X)\mathbb{E} \left(f^{\prime}(X+Y)\right).
\end{align}
\noindent
This proves the result.
\end{proof}

\begin{rem}
	Note that the covariance identity in \eqref{eqn1} coincides with the one given in \cite[Proposition 3.8]{k0}. However, the usefulness of the identity is shown in deriving the error bound of the limiting distributions of TSD (see the proof of Theorem \ref{bd:th2}).
\end{rem}

\subsection{A Stein equation for tempered stable distributions} In this section, we first derive a Stein equation for TSD and then solve it via the semigroup approach. From Proposition \ref{th1}, for any $f \in \mathcal{S}(\mathbb{R})$,
\begin{align}\label{STOPTSD}
	\mathcal{A}f(x):=-xf(x)+\displaystyle\int_{\mathbb{R}}uf(x+u)\nu_{ts}(du)
\end{align} 
\noindent
is a Stein operator for TSD. Hence, a Stein equation for $X\sim$TSD$(m_1,\alpha_1,\lambda_1,m_2,\alpha_2,$ $\lambda_2)$ is given by
\begin{equation}\label{PP2:e15}
	\mathcal{A}f(x)= h(x)-\mathbb{E}(h(X)),
\end{equation}
where $h\in \mathcal{H},$ a class of test functions. The semigroup approach for solving the Stein equation \eqref{PP2:e15} is developed by Barbour \cite{k8}, and Arras and Houdr\'e \cite{k0} generalized it for infinitely divisible distributions with the finite first moment. Let $\mathcal{F}=\bar{\mathcal{S}}(\mathbb{{R}})$ henceforth denote the closure of $\mathcal{S}(\mathbb{{R}})$. Consider the following family of operators $(P_{t})_{t\geq0}$, for all $x\in\mathbb{R}$, as
\begin{equation}\label{PP2:e16}
	P_{t}(f)(x)=\frac{1}{2\pi}\int_{\mathbb{R}}\hat{f}(z)e^{iz xe^{-t}}\frac{\phi_{ts}(z)}{\phi_{ts}(e^{-t}z)}dz, ~~f\in\mathcal{F},
\end{equation} 
\noindent
  where $\hat{f}$ is the FT of $f$, and $\phi_{ts}$ is the cf of TSD given in \eqref{e1}. It is well known that (see \cite{sato}), one can define a cf, for all $z\in \mathbb{R},$ and $t\geq 0,$ by
\begin{align}\label{PP2:a15}
	\phi_t(z):=\frac{\phi_{ts}(z)}{\phi_{ts}(e^{-t}z)}=\displaystyle\int_{\mathbb{R}}e^{iz u}F_{X_{(t)}}(du),
\end{align}
\noindent
where $F_{X_{(t)}}$ is the CDF of $X_{(t)}$. The property given in \eqref{PP2:a15} is also known as self-decomposability, see \cite{sato}. Using this property, we get

\begin{align}
	\nonumber  P_t(f)(x)&=\frac{1}{2\pi}\int_{\mathbb R}\int_{\mathbb{R}}\widehat{f}(z)e^{iz xe^{-t}}e^{iz u}F_{X_{(t)}}(du)dz\\
	\nonumber  &=\frac{1}{2\pi}\int_{\mathbb R}\int_{\mathbb{R}}\widehat{f}(z)e^{iz (u+xe^{-t})}F_{X_{(t)}}(du)dz\\
	&=\displaystyle\int_{\mathbb{R}}f(u+xe^{-t})F_{X_{(t)}}(du),\label{PP2:a17}
\end{align}
\noindent
where the last step follows by applying inverse FT.

\begin{pro}\label{PP2:proSem}
	Let $(P_{t})_{t\geq 0}$ a family of operators given in \eqref{PP2:e16}. Then
	\begin{enumerate}
	\item [(i)]$(P_{t})_{t\geq 0}$ is a $\mathbb{C}_0$-semigroup on $\mathcal{F}$.
	\item [(ii)]Its generator $T$ is given by
	\begin{equation}\label{e7}
	T(f)(x)=-xf^{\prime}(x)+\displaystyle\int_{\mathbb R}uf^{\prime}(x+u)\nu_{ts}(du),~~f\in\mathcal{S}(\mathbb{R}).
	\end{equation}	
	\end{enumerate}	 
\end{pro}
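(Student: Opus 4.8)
The plan is to treat the two assertions separately, using the integral representation \eqref{PP2:a17}, $P_t(f)(x)=\int_{\mathbb R}f(u+xe^{-t})F_{X_{(t)}}(du)=\mathbb E\,f(xe^{-t}+X_{(t)})$, for the algebraic and continuity properties in (i), and the Fourier representation \eqref{PP2:e16} for the explicit form of the generator in (ii); throughout, $\mathcal F=\overline{\mathcal S(\mathbb R)}=C_0(\mathbb R)$ and $\widehat f\in\mathcal S(\mathbb R)$ whenever $f\in\mathcal S(\mathbb R)$. For (i), each $P_t$ is a contraction on $\mathcal F$: since $F_{X_{(t)}}$ is a probability law, $\|P_tf\|\le\|f\|$, and $P_tf\in C_0(\mathbb R)$ because $xe^{-t}+X_{(t)}\to\pm\infty$ as $x\to\pm\infty$, by dominated convergence. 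The identity $P_0=\mathrm{Id}$ is immediate from \eqref{PP2:a15}, since $\phi_0\equiv1$ forces $X_{(0)}=0$ a.s. The semigroup law $P_{s+t}=P_sP_t$ is the analytic counterpart of the self-decomposability identity $X_{(s+t)}\stackrel{d}{=}e^{-t}X_{(s)}+X'_{(t)}$ (with $X'_{(t)}$ an independent copy), which I would verify through characteristic functions using \eqref{PP2:a15}, as $\phi_s(e^{-t}z)\phi_t(z)=\frac{\phi_{ts}(e^{-t}z)}{\phi_{ts}(e^{-(s+t)}z)}\frac{\phi_{ts}(z)}{\phi_{ts}(e^{-t}z)}=\phi_{s+t}(z)$; substituting this factorisation into the nested expectation $P_s(P_tf)(x)=\mathbb E f\big((xe^{-s}+X_{(s)})e^{-t}+X'_{(t)}\big)$ reproduces $P_{s+t}(f)(x)$.

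Strong continuity is cleanest to establish first on the dense set $\mathcal S(\mathbb R)$ and then to extend to $\mathcal F$ by density together with the uniform bound $\|P_t\|\le1$. For $f\in\mathcal S(\mathbb R)$ I would split $\|P_tf-f\|\le\sup_x\mathbb E|f(xe^{-t}+X_{(t)})-f(xe^{-t})|+\sup_x|f(xe^{-t})-f(x)|$; the first term is controlled by the (uniform) modulus of continuity of $f$ together with $X_{(t)}\Rightarrow0$ (because $\phi_t(z)\to1$ pointwise), while the second, the pure dilation part, becomes a translation after the change of variable $x=\pm e^s$ on each half-line, where strong continuity of translations applies since $s\mapsto f(\pm e^s)$ is bounded and uniformly continuous for Schwartz $f$, the behaviour near $x=0$ being handled by a local Lipschitz bound.

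For (ii), I would compute $Tf(x)=\lim_{t\downarrow0}(P_tf(x)-f(x))/t$ directly from \eqref{PP2:e16} by differentiating the integrand in $t$ at $t=0$. Writing the integrand as $\widehat f(z)\,e^{izxe^{-t}}\phi_t(z)$, its $t$-derivative at $0$ is $\widehat f(z)\big(-izx\,e^{izx}+z\,e^{izx}\,\frac{\phi_{ts}'(z)}{\phi_{ts}(z)}\big)$, where $\frac{\phi_{ts}'(z)}{\phi_{ts}(z)}=i\int_{\mathbb R}u e^{izu}\nu_{ts}(du)$ by \eqref{PP2:e4}. Inverting the Fourier transform term by term then gives $-x f'(x)$ from the first piece (since $\frac{1}{2\pi}\int iz\,\widehat f(z)e^{izx}dz=f'(x)$) and, after a Fubini exchange, $\int_{\mathbb R}u f'(x+u)\nu_{ts}(du)$ from the second, which is exactly \eqref{e7}. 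The finiteness of the first moment of $\nu_{ts}$ recorded in \eqref{cumulfor} guarantees that this L\'evy-type integral converges and that $Tf\in C_0(\mathbb R)$.

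The main obstacle is the justification of (ii) in the sup-norm rather than merely pointwise, i.e. showing the difference quotient converges to $Tf$ uniformly in $x$ so that $\mathcal S(\mathbb R)$ genuinely lies in the domain of the generator. The delicate point is the dilation contribution: the pointwise $t$-derivative carries a factor $zx$, so naive domination under the $z$-integral fails, and one must keep the oscillatory factor $e^{izxe^{-t}}$ paired with $\widehat f(z)$ so that the $z$-integration produces Schwartz-class functions of $x$ (derivatives of $f$ and their dilations) whose decay absorbs the unbounded factor $x$. Controlling the remainder uniformly then rests on the rapid decay of $\widehat f\in\mathcal S(\mathbb R)$ and on $\int_{\mathbb R}|u|\,\nu_{ts}(du)<\infty$; the same two ingredients justify the Fubini exchange. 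Secondarily, one should confirm that $\mathcal S(\mathbb R)$ is a core, so that the operator generated coincides with $T$ on its natural domain, which follows from the general semigroup framework of Barbour \cite{k8} and Arras--Houdr\'e \cite{k0} once the strong continuity from (i) is in hand.
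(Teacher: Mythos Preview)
Your proposal is correct, and for part (ii) it is essentially identical to the paper's argument: both differentiate the Fourier integrand $\widehat f(z)e^{izxe^{-t}}\phi_t(z)$ in $t$ at $t=0$, invoke \eqref{PP2:e4} to rewrite $\phi_{ts}'/\phi_{ts}$, and then invert term by term to recover $-xf'(x)+\int u f'(x+u)\,\nu_{ts}(du)$.

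For part (i) the two routes diverge in execution. The paper verifies the semigroup identity purely on the Fourier side: it expands $P_t(P_s f)(x)$ from \eqref{PP2:e16}, inserts the definition of $\widehat{P_s f}$, and collapses the resulting triple integral by recognising $\int_{\mathbb R}e^{iv(we^{-s}-y)}dv=2\pi\,\delta(we^{-s}-y)$, after which the factorisation $\phi_{t+s}(y)=\phi_s(y)\phi_t(ye^{-s})$ finishes the computation. You instead work with the probabilistic representation \eqref{PP2:a17}, read the same cf factorisation as the distributional identity $X_{(s+t)}\stackrel{d}{=}e^{-t}X_{(s)}+X'_{(t)}$, and compose expectations. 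Your route avoids the (formal) Dirac-delta manipulation and makes the contraction property and the $C_0(\mathbb R)$-valuedness of $P_t$ transparent; it also gives a genuine argument for strong continuity, which the paper's proof does not address beyond noting $\phi_t\to1$. Conversely, the paper's Fourier calculation keeps everything inside a single analytic framework and requires no auxiliary independence/coupling statement. Your additional remarks on upgrading the convergence in (ii) from pointwise to uniform, and on $\mathcal S(\mathbb R)$ being a core, are well taken and in fact go beyond what the paper justifies.
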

\noindent
Following the steps similar to Proposition 3.8 and Lemma 3.10 of \cite{k1}, the proof follows.

\vskip 1ex
\noindent
Next, we provide a solution to Stein equation \eqref{PP2:e15}.  

\begin{thm}\label{thmsol}
	Let $X\sim \text{TSD}(m_1,\alpha_1,\lambda_1,m_2,\alpha_2,\lambda_2)$and $h\in\mathcal{H}_{r}$. Then the function $f_{h}:\mathbb{R}\to \mathbb{R}$ defined by 
\begin{equation}\label{PP2:SolSe}
f_{h}(x):=-\displaystyle
\int_{0}^{\infty}\frac{d}{dx}P_{t}h(x)dt,
\end{equation}
solves \eqref{PP2:e15}.

\end{thm}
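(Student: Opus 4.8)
The plan is to exploit the tight relationship between the Stein operator $\mathcal{A}$ in \eqref{STOPTSD} and the semigroup generator $T$ in \eqref{e7}, combined with the fundamental theorem of calculus for the $\mathbb{C}_0$-semigroup $(P_t)_{t\geq 0}$ from Proposition \ref{PP2:proSem}. The crucial algebraic observation is that applying $\mathcal{A}$ to a derivative reproduces the generator, that is, $\mathcal{A}(g')(x) = -x g'(x) + \int_{\mathbb{R}} u g'(x+u)\,\nu_{ts}(du) = T(g)(x)$ for every sufficiently smooth $g$. Since the candidate solution in \eqref{PP2:SolSe} is precisely $f_h = -\int_0^\infty (P_t h)'\,dt$, formally $\mathcal{A}f_h = -\int_0^\infty \mathcal{A}\big((P_t h)'\big)\,dt = -\int_0^\infty T(P_t h)\,dt$, and the semigroup identity $\frac{d}{dt}P_t h = T P_t h$ then reduces the claim to a boundary-term computation.

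First I would verify that $f_h$ is well defined. Differentiating the representation \eqref{PP2:a17} in $x$ yields $(P_t h)'(x) = e^{-t}\int_{\mathbb{R}} h'(u + x e^{-t})\,F_{X_{(t)}}(du)$, so that $|(P_t h)'(x)| \leq e^{-t}\|h'\| \leq e^{-t}$ whenever $h\in\mathcal{H}_r$ with $r\geq 1$. Consequently $\int_0^\infty |(P_t h)'(x)|\,dt \leq \int_0^\infty e^{-t}\,dt = 1 < \infty$, which shows that the integral defining $f_h$ converges absolutely and that $\|f_h\| \leq 1$. This uniform exponential control is what makes all subsequent interchanges of $\mathcal{A}$ (and of the $\nu_{ts}$-integration) with the $t$-integral legitimate.

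Next I would carry out the boundary-term evaluation. Moving $\mathcal{A}$ under the integral and using $\mathcal{A}\big((P_t h)'\big) = T(P_t h) = \frac{d}{dt}P_t h$ gives $\mathcal{A}f_h(x) = -\int_0^\infty \frac{d}{dt}(P_t h)(x)\,dt = (P_0 h)(x) - \lim_{s\to\infty}(P_s h)(x)$. From \eqref{PP2:a17} at $t=0$ one has $X_{(0)}\equiv 0$, whence $P_0 h = h$; and as $s\to\infty$ the factor $x e^{-s}\to 0$ while $\phi_s(z) = \phi_{ts}(z)/\phi_{ts}(e^{-s}z) \to \phi_{ts}(z)$ (since $\phi_{ts}(0)=1$), so $X_{(s)}$ converges in distribution to $X$ and hence $P_s h(x)\to \mathbb{E}(h(X))$. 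Combining these yields $\mathcal{A}f_h(x) = h(x) - \mathbb{E}(h(X))$, which is exactly \eqref{PP2:e15}.

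The main obstacle I anticipate is the rigorous justification of moving $\mathcal{A}$ (together with the inner $\nu_{ts}$-integration) past $\int_0^\infty \cdots\,dt$, and of the identity $T P_t h = \frac{d}{dt}P_t h$. For the former, the bound $|(P_t h)'(x)| \le e^{-t}$ reduces matters to checking that $\int_{\mathbb{R}} |u|\,\nu_{ts}(du) < \infty$, which holds because $\alpha_i < 1$ tames the singularity of $\nu_{ts}$ at the origin in \eqref{e2}, while the exponential tempering controls the tails; this makes $\mathcal{A}\big((P_t h)'\big)(x)$ dominated by a constant multiple of $e^{-t}$, uniformly in $t$, so that Fubini's theorem and dominated convergence apply. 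For the latter, one must know that $P_t h$ lies in the domain of the generator $T$ of Proposition \ref{PP2:proSem}, which follows from the smoothing effect of \eqref{PP2:a17}. Finally, upgrading the weak convergence $X_{(s)}\Rightarrow X$ to $P_s h(x)\to \mathbb{E}(h(X))$ is immediate since $h$ is bounded and continuous. Once these estimates are in place, the boundary-term computation closes the proof.
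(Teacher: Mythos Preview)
Your proposal is correct and follows essentially the same route as the paper: both hinge on the identity $\mathcal{A}(g')=T(g)$, the semigroup relation $TP_t h=\tfrac{d}{dt}P_t h$, and the boundary evaluation $P_0 h=h$, $P_\infty h=\mathbb{E}h(X)$. The paper packages this slightly differently by introducing the antiderivative $g_h(x)=-\int_0^\infty (P_t h(x)-\mathbb{E}h(X))\,dt$ and writing $\mathcal{A}f_h=Tg_h$ before moving $T$ under the integral, whereas you move $\mathcal{A}$ under the $t$-integral directly; the two are equivalent, and your additional discussion of well-definedness and the Fubini/DCT justifications (via $|(P_t h)'|\le e^{-t}$ and $\int_{\mathbb{R}}|u|\,\nu_{ts}(du)<\infty$) supplies detail that the paper leaves implicit.
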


\begin{proof}
Let $$g_{h}(x)=-\displaystyle\int_{0}^{\infty}\bigg(P_{t}(h)(x)-\mathbb{E}h(X)  \bigg)dt .$$
\noindent
Then $g_{h}^{\prime}(x)=f_{h}(x).$ Also from \eqref{e7}, we get
\begin{align}
\nonumber\mathcal{A}f_{h}(x)&=-xf_{h}(x)+\int_{\mathbb{R}} uf_{h}(x+u)\nu_{bg} (du)\\
\nonumber&=Tg_{h}(x) \\
\nonumber&=-\displaystyle\int_{0}^{\infty}TP_{t}(h)(x)dt\\
\nonumber&=-\displaystyle\int_{0}^{\infty}\frac{d}{dt}P_{t}h(x)dt\text{ (see \cite[p.68]{nourdin})}\\
\nonumber&=P_{0}h(x)-P_{\infty}h(x)\\
\nonumber&=h(x)-\mathbb{E}h(X)~(\text{by Proposition \ref{PP2:proSem}}).
\end{align}
\noindent
Hence, $f_{h}$ is the solution to \eqref{PP2:e15}. 	
\end{proof}
\subsection{Properties of the solution of the Stein equation}
The next step is to estimate the properties of $f_{h}$. In the following theorem, we establish estimates of $f_{h}$, which play a crucial role in the TSD approximation problems. Gaunt \cite{k24,kk2} and D$\ddot{o}$bler {\it et al.} \cite{kk6} propose various methods for bounding the solution to the Stein equations that allow them to derive properties of the solution to the Stein equation, in particular for a sub-family of TSD, namely the variance-gamma.
\begin{lem}\label{th3}
		For $h\in\mathcal{H}_{r+1}$, let $f_{h}$ be defined in \eqref{PP2:SolSe}. Then
		\begin{enumerate}
			\item [(i)] for $r=0,1,2,\ldots,$			
			\begin{align}\label{PP2:pr1}
			\|f_{h}^{(r)}\|\leq \frac{1}{r+1} \|h^{(r+1)}\|.
			\end{align}
			\item [(ii)] For any $x,y\in\mathbb{R},$ 
			\begin{align}\label{PP2:pr02}
			\|f^{\prime}_{h}(x)- f^{\prime}_{h}(y)  \| \leq \frac{\|h^{(3)}\|}{3}\left| x-y\right|.
			\end{align}	 
		\end{enumerate}

		\end{lem}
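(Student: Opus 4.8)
The plan is to first obtain a clean closed form for the derivatives of $f_h$ in terms of the semigroup $(P_t)_{t\geq 0}$, and then exploit the fact that each $P_t$ is an averaging operator against the probability law $F_{X_{(t)}}$ appearing in \eqref{PP2:a17}.

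First I would record the elementary scaling identity for the semigroup. Differentiating the representation \eqref{PP2:a17} under the integral sign (legitimate because $h\in\mathcal{H}_{r+1}$ has uniformly bounded derivatives and $F_{X_{(t)}}$ is a probability measure) gives, for each $k\geq 0$,
$$\frac{d^k}{dx^k}P_t(g)(x)=e^{-kt}P_t\big(g^{(k)}\big)(x),$$
since the chain rule produces one factor $e^{-t}$ per differentiation of the argument $u+xe^{-t}$. Applying this with $k=1$ to \eqref{PP2:SolSe} rewrites the solution as $f_h(x)=-\int_0^\infty e^{-t}P_t(h')(x)\,dt$, and differentiating $r$ further times yields the key formula
$$f_h^{(r)}(x)=-\int_0^\infty e^{-(r+1)t}\,P_t\big(h^{(r+1)}\big)(x)\,dt.$$

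For part (i) I would then use that $P_t$ is a contraction in the supremum norm: because $F_{X_{(t)}}$ is a probability distribution, $|P_t(g)(x)|\leq\|g\|$ for all $x$. Substituting $g=h^{(r+1)}$ into the key formula and integrating $\int_0^\infty e^{-(r+1)t}\,dt=\tfrac{1}{r+1}$ delivers $\|f_h^{(r)}\|\leq\frac{1}{r+1}\|h^{(r+1)}\|$, which is \eqref{PP2:pr1}. For part (ii) I would specialize the key formula to $r=1$, so that $f_h'(x)-f_h'(y)=-\int_0^\infty e^{-2t}\big(P_t(h'')(x)-P_t(h'')(y)\big)\,dt$, and estimate the bracket directly. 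Writing it as the $F_{X_{(t)}}$-average of $h''(u+xe^{-t})-h''(u+ye^{-t})$ and using that $h''$ is Lipschitz with constant $\|h^{(3)}\|$, the spatial contraction factor $e^{-t}$ produces $|P_t(h'')(x)-P_t(h'')(y)|\leq\|h^{(3)}\|\,e^{-t}|x-y|$. Combining this with the weight $e^{-2t}$ and $\int_0^\infty e^{-3t}\,dt=\tfrac13$ gives \eqref{PP2:pr02}.

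The only genuine technical point is justifying the differentiation under the integral sign in \eqref{PP2:a17} and the interchange of the $x$-derivatives with the outer $\int_0^\infty dt$ integral. Both are settled by dominated convergence, using the uniform bounds $\|h^{(k)}\|\leq 1$ for $k\leq r+1$ together with the exponential weights $e^{-(r+1)t}$, which guarantee absolute convergence of every integral in sight; I do not expect any substantive obstacle beyond this routine bookkeeping.
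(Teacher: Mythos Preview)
Your proposal is correct and follows essentially the same approach as the paper: both differentiate the representation \eqref{PP2:a17} under the integral sign to pick up a factor $e^{-t}$ per derivative, use that $F_{X_{(t)}}$ is a probability measure to bound $P_t$ as a sup-norm contraction, and then integrate the resulting exponential weights. The only cosmetic difference is that you package the observation $\frac{d^k}{dx^k}P_t(g)=e^{-kt}P_t(g^{(k)})$ as a single scaling identity up front, whereas the paper writes out the $r=0$ and $r=1$ cases explicitly and appeals to induction.
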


\begin{proof}
(i) For $h\in \mathcal{H}_{r+1}$,
\begin{align*}
\|f_h\|&=\sup_{x\in\mathbb{R}} \left|-\displaystyle
\int_{0}^{\infty}\frac{d}{dx}P_{t}h(x)dt\right|\\
&=\sup_{x\in\mathbb{R}} \left| -\displaystyle
\int_{0}^{\infty}e^{-t}\int_{\mathbb{R}}h^{(1)}(xe^{-t}+y)F_{X_{(t)}}(dy)dt \right|\\
&\leq \|h^{(1)}\| \left|\int_{0}^{\infty}e^{-t}dt \right|\\	
&= \|h^{(1)}\|.
\end{align*}
\noindent
It can be easily seen that $f_{h}$ is $r$-times differentiable. Let $r=1$, then 
\begin{align*}
\|f_h^{(1)}\|&=\sup_{x\in\mathbb{R}} \left| -\displaystyle
\int_{0}^{\infty}e^{-2t}\int_{\mathbb{R}}h^{(2)}(xe^{-t}+y)F_{X_{(t)}}(dy)dt \right|\\
&\leq \|h^{(2)}\| \left|\int_{0}^{\infty}e^{-2t}dt \right|\\	
&= \frac{\|h^{(2)}\|}{2}.
\end{align*}
\noindent
Also, by induction, we get
$$\|f_{h}^{(r)}\|\leq \frac{1}{r+1}\|h^{(r+1)}\|,~r=0,1,2,\ldots.$$

\vskip 1ex
\noindent
(ii) For any $x,y\in\mathbb{R}$ and $h\in \mathcal{H}_{3}$,
\begin{align*}
	\left|f^{\prime}_{h}(x)-f^{\prime}_{h}(y) \right| & \leq \displaystyle\int_{0}^{\infty}e^{-2t}\int_{\mathbb{R}}\left|h^{(2)}(xe^{-t}+z)-h^{(3)}(ye^{-t}+z)  \right|F_{X_{(t)}}(dz)dt\\
	&\leq  \displaystyle\int_{0}^{\infty}e^{-2t}\int_{\mathbb{R}}\|h^{(3)}\|\left|x-y\right|e^{-t}F_{X_{(t)}}(dz)dt\\
	&=\|h^{(3)}\|\left|x-y  \right| \displaystyle\int_{0}^{\infty}e^{-3t}dt\\
	&=\frac{\|h^{(3)}\|}{3}\left|x-y  \right|.
\end{align*}
\noindent
This proves the result.
\end{proof}

\vfill
\section{Bounds for tempered stable approximation}\label{BFTSD}
\noindent
In this section, we present bounds for the tempered stable approximations to various probability distributions. First, we obtain, for the Kolmogorov distance $d_K$, the error bounds for a sequence of CPD that converges to a TSD.  
\begin{thm}
	Let $X \sim \text{TSD} (m_1,\alpha_1,\lambda_1,m_2,\alpha_2,\lambda_2)$ and $X_n,$ $n\geq 1$ be a compound Poisson rvs with cf
		\begin{align}\label{cpdcf00}
		\phi_n(z):= \exp\bigg(n \left( \phi_{ts}^{\frac{1}{n}}(z)-1  \right)   \bigg),~z\in\mathbb{{R}},
		\end{align}
where $\phi_{ts}(t)$ is given in \eqref{e1}. Then
\begin{align}
	d_{K}(X_n,X) \leq c \bigg(\sum_{j=1}^{2}|C_{j}(X)|\bigg)^{\frac{2}{5}} \bigg(\frac{1}{n}\bigg)^{\frac{1}{5}},
\end{align}
where $c>0$ is independent of $n$ and $C_j$ denotes the $j$th cumulant of $X$.	
\end{thm}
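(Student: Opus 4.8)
The plan is to bypass the Stein machinery for this particular statement and argue directly through characteristic functions, using Esseen's smoothing inequality (the Berry--Esseen smoothing lemma). The TSD has a bounded density, $m:=\sup_{x}f_{X}(x)<\infty$ (this enters only through the constant $c$, and holds for the parameter ranges of interest, where $|\phi_{ts}|$ is integrable so that $f_{X}\leq \tfrac{1}{2\pi}\int_{\mathbb R}|\phi_{ts}|$), so Esseen's lemma gives, for every $T>0$,
\[
d_{K}(X_n,X)\leq C_1\int_{0}^{T}\frac{|\phi_n(z)-\phi_{ts}(z)|}{z}\,dz+\frac{C_2\,m}{T},
\]
with $C_1,C_2$ absolute constants. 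The whole argument then reduces to estimating the integral and optimizing over $T$.

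First I would write $\Psi(z)=\log\phi_{ts}(z)=\int_{\mathbb R}(e^{izu}-1)\nu_{ts}(du)$, so that $\phi_n(z)=\exp\!\big(n(e^{\Psi(z)/n}-1)\big)$. Setting $D(z):=\log\phi_n(z)-\Psi(z)=n\big(e^{\Psi/n}-1\big)-\Psi$ and using $|e^{\zeta}-1-\zeta|\leq \tfrac12|\zeta|^2 e^{|\zeta|}$ with $\zeta=\Psi/n$ gives $|D(z)|\leq \tfrac{1}{2n}|\Psi(z)|^{2}e^{|\Psi(z)|/n}$. Since $\phi_n-\phi_{ts}=\phi_{ts}(e^{D}-1)$, $|e^{D}-1|\leq |D|e^{|D|}$, and $|\phi_{ts}(z)|\leq 1$, this yields
\[
|\phi_n(z)-\phi_{ts}(z)|\leq \frac{|\Psi(z)|^{2}}{2n}\,e^{|\Psi(z)|/n}\,e^{|D(z)|}.
\]
Provided $T$ is taken so that $|\Psi(z)|/n$ stays bounded on $[0,T]$ (the eventual choice $T\asymp n^{1/5}$ respects this), both exponential factors are $O(1)$ and $|\phi_n(z)-\phi_{ts}(z)|\leq C\,|\Psi(z)|^{2}/n$.

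Next I would control the log-cf by the first two cumulants. Writing $e^{izu}-1=izu+(e^{izu}-1-izu)$, using $|e^{izu}-1-izu|\leq \tfrac12 z^{2}u^{2}$, and recalling from \eqref{cumulfor} that $C_1(X)=\int u\,\nu_{ts}(du)$ and $C_2(X)=\int u^{2}\nu_{ts}(du)$, one obtains
\[
|\Psi(z)|\leq |C_1(X)|\,|z|+\tfrac{1}{2}C_2(X)\,z^{2}\leq \Big(\sum_{j=1}^{2}|C_j(X)|\Big)(|z|+z^{2}).
\]
Hence $|\Psi(z)|^{2}\leq 2\big(\sum_{j=1}^{2}|C_j(X)|\big)^{2}(z^{2}+z^{4})$, and for $T\geq 1$,
\[
\int_{0}^{T}\frac{|\phi_n(z)-\phi_{ts}(z)|}{z}\,dz\leq \frac{C}{n}\Big(\sum_{j=1}^{2}|C_j(X)|\Big)^{2}\int_{0}^{T}(z+z^{3})\,dz\leq \frac{C'}{n}\Big(\sum_{j=1}^{2}|C_j(X)|\Big)^{2}T^{4}.
\]

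Combining with Esseen's inequality gives $d_{K}(X_n,X)\leq \tfrac{C'}{n}A\,T^{4}+\tfrac{C_2 m}{T}$ with $A=\big(\sum_{j=1}^{2}|C_j(X)|\big)^{2}$; choosing $T\asymp (mn/A)^{1/5}$ balances the two terms and produces
\[
d_{K}(X_n,X)\leq c\,A^{1/5}\,n^{-1/5}=c\Big(\sum_{j=1}^{2}|C_j(X)|\Big)^{2/5}\Big(\frac{1}{n}\Big)^{1/5},
\]
with $c$ absorbing $m$ and the absolute constants, hence independent of $n$. The main obstacle is precisely this two-sided trade-off: the cumulant bound on $|\Psi|$ grows like $z^{2}$, so enlarging $T$ to shrink the $m/T$ term inflates the integral, while one must simultaneously keep $|\Psi(z)|/n$ (and thus the remainder factors $e^{|\Psi|/n},e^{|D|}$) bounded on $[0,T]$; the exponent $1/5$ is the exact outcome of this balance, and checking that the near-optimal $T\asymp n^{1/5}$ keeps all remainders $O(1)$ is the delicate step. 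A secondary point to pin down is the boundedness of the TSD density feeding the $m/T$ term.
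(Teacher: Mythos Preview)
Your argument is correct and arrives at exactly the stated bound, but it takes a genuinely different route from the paper. The paper does not run the Esseen smoothing lemma directly; instead it observes that $\text{TSD}(m_1,\alpha_1,\lambda_1,m_2,\alpha_2,\lambda_2)=ID(b,0,\nu_{ts})$ with $\mathbb{E}|X|^2<\infty$ and a bounded density, and then applies as a black box Proposition~4.11 of Arras--Houdr\'e \cite{k0}, which already packages an Esseen-type smoothing estimate together with the compound-Poisson approximation of an infinitely divisible law. The only analytic work left in the paper is to verify the hypothesis $|\phi_{ts}(z)|\int_0^{|z|}|\phi_{ts}(s)|^{-1}\,ds\leq c_0|z|^{p}$ with $p=1$, after which the exponents $\tfrac{2}{p+4}=\tfrac{2}{5}$ and $\tfrac{1}{p+4}=\tfrac{1}{5}$ drop out automatically.

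What your approach buys is self-containment: you never invoke \cite{k0}, and your Taylor control of $D(z)=n(e^{\Psi/n}-1)-\Psi$ together with the cumulant bound $|\Psi(z)|\leq |C_1(X)|\,|z|+\tfrac12 C_2(X)z^2$ makes the trade-off behind the exponent $1/5$ completely transparent. What the paper's approach buys is brevity and reach: Proposition~4.11 of \cite{k0} applies to any $ID(b,0,\nu)$ with finite second moment and the $p$-regularity condition, so the paper's proof would transfer verbatim to other self-decomposable targets once $p$ is identified. Both routes rely on the same structural inputs (bounded density of the TSD and finiteness of $C_1,C_2$), and both give a constant $c$ that depends on the TSD parameters through $\sup_x f_X(x)$ but not on $n$. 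Your check that the choice $T\asymp n^{1/5}$ keeps $|\Psi|/n$ and $|D|$ bounded on $[0,T]$ is exactly the point that makes the balance work, and it is sound.
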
 
\begin{proof}
Let $b=\int_{-1}^{1}u\nu_{ts}(du)$, where $\nu_{ts}$ is defined in \eqref{e2}. Then by Equation (2.6) of \cite{k0},
$$\text{TSD} (m_1,\alpha_1,\lambda_1,m_2,\alpha_2,\lambda_2)\overset{d}{=}ID(b,0,\nu_{ts}).$$
\noindent
That is, $\text{TSD} (m_1,\alpha_1,\lambda_1,m_2,\alpha_2,\lambda_2)$ is an infinitely divisible distribution with the triplet $b,0$ and $\nu_{ts}$. Note that TSD are absolutely continuous with respect to the L\'evy measure with a bounded density and $\mathbb{E}|X|^2 < \infty$ (see \cite[Section 7]{k13}). Recall from Proposition 4.11 of \cite{k0} that, if $X\sim ID(b,0,\nu)$ with cf $\phi_X$ (say), and $X_n,$ $n\geq 1$ are compound Poisson rvs each of with cf as \eqref{cpdcf00}, then
\begin{align}\label{kolbdd}
	d_{K}(X_n,X)&\leq c\bigg( |\mathbb{E}(X)|+\int_{\mathbb{R}}u^2\nu(du)  \bigg)^{\frac{2}{p+4}}\bigg(\frac{1}{n}\bigg)^{\frac{1}{p+4}},
\end{align}
\noindent
where $|\phi_X(z)| \displaystyle\int_{0}^{|z|}\frac{ds}{|\phi_X(s)|} \leq c_0 |z|^p,$ $p\geq 1$.

\vskip 1ex
\noindent
Now observe that
\begin{align*}
	|\phi_{ts}(z)| \displaystyle\int_{0}^{|z|}\frac{ds}{|\phi_{ts}(s)|}
	&\leq \displaystyle\int_{0}^{|z|}\frac{ds}{|\mathbb{E}(\cos sX)+ i \mathbb{E}(\sin sX)|}\\	
	&=\displaystyle\int_{0}^{|z|}\frac{|\mathbb{E}(e^{\left( -is X\right)}) |}{\mathbb{E}^2 (\cos sX)+ \mathbb{E}^2 (\sin sX)}ds\\
	 &\leq c_0 \displaystyle\int_{0}^{|z|}|\mathbb{E}(e^{\left( -is X\right)}) |ds\\
	 &\quad\quad\quad\quad\bigg(\frac{1}{\mathbb{E}^2 (\cos sX)+ \mathbb{E}^2 (\sin sX)}<c_0 \text{, say}\bigg)\\
	& =c_0\displaystyle\int_{0}^{|z|}|\mathbb{E}(\cos sX-i\sin sX)|ds\\
	&\leq c_0\displaystyle\int_{0}^{|z|}ds\\
	&=c_0|z|.
\end{align*}
\noindent
Hence by \eqref{kolbdd}, for $p=1$, we get
\begin{align*}
d_{K}(X_n,X)&\leq c\bigg( |\mathbb{E}(X)|+\int_{\mathbb{R}}u^2\nu_{ts}(du)  \bigg)^{\frac{2}{5}}\bigg(\frac{1}{n}\bigg)^{\frac{1}{5}}\\
 &= c \bigg(\sum_{j=1}^{2}|C_{j}(X)|\bigg)^{\frac{2}{5}} \bigg(\frac{1}{n}\bigg)^{\frac{1}{5}}, 
\end{align*}

\noindent
$\text{ since }\int_{\mathbb{R}}u^2\nu_{ts}(du)=C_2(X).$ This proves the result.
\end{proof}
\begin{rem}
	Note that if $n\to \infty$, $d_K(X_n,X)=0$, as expected, and TSD is the limit of CPD.
\end{rem}
\noindent
Our next result yields an error bound for tempered stable approximation to $\alpha$-stable distributions.
 \begin{thm}
 	Let $\alpha \in (0,1)$. Let $X\sim$TSD$(m_1,\alpha,\lambda_1,m_2,\alpha,\lambda_2)$ and $X_\alpha \sim S(m_1,m_2,\alpha)$. Then
 	\begin{align}\label{stsdbd}
 		d_{K}(X_\alpha,X) \leq C_1 \lambda_1^{\alpha+\frac{1}{2}}+C_{2} \lambda_2^{\alpha+\frac{1}{2}},
 	\end{align}
 	where $ C_{1},C_{2}>0$ are independent of $\lambda_{1}$ and $\lambda_{2}$.
 \end{thm}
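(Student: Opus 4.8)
The plan is to run Stein's method with the roles reversed relative to the construction in Section \ref{SMTSD}: take the $\alpha$-stable law as the \emph{target} and the tempered stable law as the \emph{approximating} variable. Concretely, for a test function $h$ let $f_h$ solve the $\alpha$-stable Stein equation $\mathcal{A}_\alpha f_h(x)=h(x)-\mathbb{E}[h(X_\alpha)]$, where $\mathcal{A}_\alpha f(x)=-xf(x)+\int_{\mathbb{R}}uf(x+u)\nu_\alpha(du)$ is the operator associated with \eqref{StenIdstable}. Since the stable law is again self-decomposable, the semigroup solution of Theorem \ref{thmsol} and, crucially, the derivative bounds of Lemma \ref{th3} carry over verbatim: those bounds use only that each $F_{X_{(t)}}$ is a probability measure and the factor $e^{-t}$ produced by differentiating $P_t$, not the particular Lévy measure. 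Plugging $X\sim\text{TSD}$ into the Stein equation and taking expectations gives $\mathbb{E}[h(X)]-\mathbb{E}[h(X_\alpha)]=\mathbb{E}[\mathcal{A}_\alpha f_h(X)]$.

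First I would collapse the right-hand side using the TSD identity \eqref{PP2:StenIdTSD}, which replaces $\mathbb{E}[Xf_h(X)]$ by $\mathbb{E}\int_{\mathbb{R}}uf_h(X+u)\nu_{ts}(du)$. Because $\nu_{ts}(du)=q(u)\nu_\alpha(du)$, the two integral terms combine into a single integral against the difference of Lévy measures,
\[
\mathbb{E}[h(X)]-\mathbb{E}[h(X_\alpha)]=\mathbb{E}\int_{\mathbb{R}}u\,f_h(X+u)\,\bigl(1-q(u)\bigr)\,\nu_\alpha(du),
\]
with $1-q(u)=(1-e^{-\lambda_1u})\mathbf{I}_{(0,\infty)}(u)+(1-e^{-\lambda_2|u|})\mathbf{I}_{(-\infty,0)}(u)$. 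This is the central identity; the tempering defect $1-q$ is exactly what quantifies the discrepancy, and it vanishes as $\lambda_1,\lambda_2\downarrow0$, recovering the limit in item (vi). The positive- and negative-$u$ parts are structurally symmetric, so it suffices to estimate the $u>0$ piece $m_1\,\mathbb{E}\int_0^\infty u^{-\alpha}(1-e^{-\lambda_1u})f_h(X+u)\,du$ and treat the $u<0$ piece identically with $\lambda_2,m_2$, which is what produces the two separate summands $C_1\lambda_1^{\alpha+1/2}$ and $C_2\lambda_2^{\alpha+1/2}$.

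To estimate the $u>0$ piece I would split at a cutoff $R=R(\lambda_1)$. On $0<u<R$ the elementary bound $1-e^{-\lambda_1u}\le\lambda_1u$ gives a contribution of order $\lambda_1\int_0^R u^{1-\alpha}\,du\asymp\lambda_1R^{2-\alpha}$ (using $\|f_h\|$, or $\|f_h'\|$ after one integration by parts as in the proof of Lemma \ref{theorem1}), while on $u>R$ only $1-e^{-\lambda_1u}\le1$ is available, so the tail $\int_R^\infty u^{-\alpha}\,du$ \emph{diverges} against a merely bounded $f_h$ and must be controlled through the decay of the stable Stein solution (or by truncating the large jumps of $\nu_\alpha$). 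The cleanest way to organize the bookkeeping is to first establish a smooth-distance bound, say $d_{\mathcal{H}_r}(X,X_\alpha)\lesssim\lambda_1^{2\alpha+1}+\lambda_2^{2\alpha+1}$, where the smoothness of $h$ lets the derivative estimates of Lemma \ref{th3} absorb the critical part of the integral, and then to pass to the Kolmogorov distance via the smoothing inequality $d_K\lesssim d_{\mathcal{H}_r}^{1/2}$, valid because the $\alpha$-stable and the tempered stable laws both have bounded densities (cf. \cite[Section 7]{k13}). The square root turns $\lambda_i^{2\alpha+1}$ into $\lambda_i^{\alpha+1/2}$, which is exactly the claimed rate and explains the half-integer shift in the exponent.

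The main obstacle is precisely the absence of a first moment of $X_\alpha$ for $\alpha\in(0,1)$: the quantity $\int_{\mathbb{R}}u(1-q(u))\nu_\alpha(du)$ diverges, so the central identity cannot be bounded by naively separating $f_h(X+u)$ from $f_h(X)$, and the whole difficulty lies in taming the large-jump tail. Making this rigorous — quantifying the decay of $f_h$ (or the truncation error), carrying out the optimization over $R$ and the smoothing step so that the final power of $\lambda_i$ is exactly $\alpha+\tfrac12$, and checking that the resulting constants $C_1,C_2$ are uniform over $\alpha\in(0,1)$ and independent of $\lambda_1,\lambda_2$ — is where I expect the real work to lie; once the positive and negative jump sides are handled in parallel, the remaining algebra is routine.
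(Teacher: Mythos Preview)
Your central identity is correct and coincides with the paper's: the discrepancy collapses to an integral of $u f(\,\cdot\,+u)$ against the difference $\nu_\alpha-\nu_{ts}$, i.e., against $(1-q(u))\nu_\alpha(du)$. The paper in fact runs Stein's method in the opposite direction---TSD as target, $X_\alpha$ as the argument of the operator---but since both Stein identities are available this yields the same expression with $X_\alpha$ in place of $X$ inside the integrand.

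Where your proposal parts ways with the paper, and where it has a genuine gap, is in how the exponent $\alpha+\tfrac12$ is produced. The paper does not split at a cutoff, does not pass through a smooth metric, and does not use a smoothing inequality. It applies Cauchy--Schwarz in $L^2(0,\infty)$ directly:
\[
\int_0^\infty \frac{1-e^{-\lambda_1 u}}{u^{1+\alpha}}\,u f(X_\alpha+u)\,du
\;\le\;
\Bigl(\int_0^\infty\Bigl(\tfrac{1-e^{-\lambda_1 u}}{u^{1+\alpha}}\Bigr)^{2}du\Bigr)^{1/2}
\Bigl(\int_0^\infty u^{2} f^{2}(X_\alpha+u)\,du\Bigr)^{1/2},
\]
and the substitution $v=\lambda_1 u$ in the first factor pulls out exactly $\lambda_1^{\alpha+1/2}$, leaving $M(\alpha)=\int_0^\infty((e^{-v}-1)/v^{1+\alpha})^{2}dv$ independent of $\lambda_1$. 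The second factor is handled by declaring $f\in\mathcal{S}(\mathbb{R})$. This single Cauchy--Schwarz step is the entire mechanism behind the rate, and it is the idea missing from your outline.

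By contrast, your cutoff-and-optimize scheme does not deliver the intermediate bound $d_{\mathcal{H}_r}\lesssim\lambda_i^{2\alpha+1}$ you assert. The near piece gives $\lambda_1 R^{2-\alpha}$, but you have no concrete control of the far piece: $\int_R^\infty u^{-\alpha}\,du$ diverges for $\alpha\in(0,1)$, and Lemma~\ref{th3} supplies only sup-norm bounds on $f_h^{(r)}$, never decay of $f_h$ at infinity, so there is nothing to balance the cutoff against. (A Taylor expansion of $f_h(X+u)$ around $u=0$ makes things worse, since the leading term contributes $\lambda_1^{\alpha-1}\to\infty$.) With no smooth-distance bound in hand, the smoothing step $d_K\lesssim d_{\mathcal{H}_r}^{1/2}$ has no input. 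In short: right identity, wrong device---the paper's device is a one-line Cauchy--Schwarz that you should substitute for the cutoff argument.
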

 \begin{proof}
 	For $h\in\mathcal{H}_K$, from \eqref{PP2:e15}, we get 
 	\begin{align}
 	   \mathbb{E}[h(X_\alpha)]- \mathbb{E}[h(X)]&=\mathbb{E}[\mathcal{A} f (X_\alpha)]=\mathbb{E}\left[\mathcal{A} f (X_\alpha)-\mathcal{A}_\alpha f (X_\alpha)  \right],\label{stsdbd0}
 	\end{align}
 	\noindent
 	since $$\mathbb{E}(\mathcal{A}_\alpha f(X_\alpha))=\mathbb{E}\left(-X_\alpha f(X_\alpha)+ \int_{\mathbb{R}}uf(X_\alpha+u)\nu_{\alpha}(du)\right)=0,~f\in \mathcal{S} (\mathbb{{R}}),$$
 	\noindent
 	 where $\nu_{\alpha}$ is the L\'evy measure given in \eqref{levyst0} (see \eqref{StenIdstable}). 
 	 
 	 \noindent
 	 Then, from \eqref{stsdbd0}, we have
 	\begin{align}
 		\nonumber\bigg|\mathbb{E}[h(X_\alpha)]- \mathbb{E}[h(X)]   \bigg|&=\bigg|\mathbb{E}\bigg[\bigg(-X_\alpha f(X_\alpha) + \displaystyle\int_{\mathbb{R}} u f(X_\alpha + u) \nu_{ts} (du)  \bigg)\\
 	\nonumber	&\quad\quad\quad\quad - \bigg(-X_\alpha f(X_\alpha) + \displaystyle\int_{\mathbb{R}} u f(X_\alpha + u) \nu_{\alpha} (du)  \bigg)  \bigg] \bigg|\\
 		\nonumber&=\bigg|\mathbb{E}\bigg[ \displaystyle\int_{\mathbb{R}} u f(X_\alpha + u) \nu_{ts} (du) - \displaystyle\int_{\mathbb{R}} u f(X_\alpha + u) \nu_{\alpha} (du)   \bigg] \bigg|\\
 		\nonumber&=\bigg|\mathbb{E} \bigg[\bigg(m_1 \displaystyle\int_{0}^{\infty}uf(X_\alpha +u) \frac{e^{-\lambda_1 u}}{u^{1+\alpha}}du \\
 		\nonumber& \quad\quad\quad+  m_2 \displaystyle\int_{-\infty}^{0}uf(X_\alpha +u) \frac{e^{-\lambda_2 |u|}}{|u|^{1+\alpha}}du   \bigg)\\
 		\nonumber& \quad\quad\quad\quad\quad\quad- \bigg( m_1 \displaystyle\int_{0}^{\infty}uf(X_\alpha +u) \frac{du}{u^{1+\alpha}} \\
 		\nonumber& \quad\quad\quad\quad\quad\quad\quad+  m_2 \displaystyle\int_{-\infty}^{0}uf(X_\alpha +u) \frac{du}{|u|^{1+\alpha}}    \bigg)    \bigg]\bigg|\\
 		\nonumber&=\bigg|\mathbb{E} \bigg[m_1\displaystyle\int_{0}^{\infty} \frac{(e^{-\lambda_1 u}-1)}{u^{1+\alpha}}uf(X_\alpha +u)du\\
 		&\quad\quad-m_2\displaystyle\int_{0}^{\infty} \frac{(e^{-\lambda_2 u}-1)}{u^{1+\alpha}}uf(X_\alpha -u)du   \bigg]   \bigg|.\label{stsdbd1}
 		\end{align}
 		\noindent
 		Now applying triangle and Cauchy-Schwartz inequalities in \eqref{stsdbd1}, we get
 		\begin{align}
 		\nonumber d_{K}(X_\alpha,X)&\leq m_1\bigg\{ \displaystyle\int_{0}^{\infty}\bigg( \frac{(e^{-\lambda_1 u}-1)}{u^{1+\alpha}}\bigg)^{2} du \bigg\}^{\frac{1}{2}}\mathbb{E}\bigg(\int_{0}^{\infty}u^{2}f^{2}(X_\alpha +u) du \bigg)^{\frac{1}{2}}\\
 			\nonumber&\quad\quad+ m_2\bigg\{ \displaystyle\int_{0}^{\infty}\bigg( \frac{(e^{-\lambda_2 u}-1)}{u^{1+\alpha}}\bigg)^{2} du \bigg\}^{\frac{1}{2}}\mathbb{E}\bigg(\int_{0}^{\infty}u^{2}f^{2}(X_\alpha -u) du \bigg)^{\frac{1}{2}}\\
 	\nonumber	&=\lambda_{1}^{\alpha +\frac{1}{2}}m_1M^{\frac{1}{2}}(\alpha)\mathbb{E}\bigg(\int_{0}^{\infty}u^{2}f^{2}(X_\alpha +u) du \bigg)^{\frac{1}{2}}\\
 		&\quad\quad+\lambda_{2}^{\alpha +\frac{1}{2}}m_2M^{\frac{1}{2}}(\alpha)\mathbb{E}\bigg(\int_{0}^{\infty}u^{2}f^{2}(X_\alpha -u) du  \bigg)^{\frac{1}{2}},\label{stsdbd2}
 	\end{align}
 \noindent
 where $M(\alpha)=\displaystyle\int_{0}^{\infty}\bigg( \frac{(e^{- u}-1)}{u^{1+\alpha}}\bigg)^{2} du <\infty$ (see \cite[p.169]{kol1}). Also, $\mathbb{E}(\int_{0}^{\infty}u^{2}f^{2}(X_\alpha +u) du  )^{\frac{1}{2}}$ and $\mathbb{E}(\int_{0}^{\infty}u^{2}f^{2}(X_\alpha -u) du  )^{\frac{1}{2}}$ are finite, since $f\in \mathcal{S}(\mathbb{{R}}).$ Now setting
 \begin{align*}
 C_1&=m_1M^{\frac{1}{2}}(\alpha)\mathbb{E}\bigg(\int_{0}^{\infty}u^{2}f^{2}(X_\alpha +u) du \bigg)^{\frac{1}{2}}<\infty,\\
 \text{and }C_2&=m_2M^{\frac{1}{2}}(\alpha)\mathbb{E}\bigg(\int_{0}^{\infty}u^{2}f^{2}(X_\alpha -u) du  \bigg)^{\frac{1}{2}}<\infty,
 \end{align*}
 \noindent
 in \eqref{stsdbd2}, we get
 	\begin{align*}
 		d_{K}(X_\alpha,X) \leq C_1\lambda_{1}^{\alpha +\frac{1}{2}} +C_2\lambda_{2}^{\alpha +\frac{1}{2}},
 	\end{align*}
 	\noindent
 where $ C_{1},C_{2}>0$ are independent of $\lambda_{1}$ and $\lambda_{2}$. This proves the result.	
 \end{proof}

\noindent
Next, we state a result that gives the limiting distribution of a sequence of tempered stable random variables.
 \begin{lem}(\cite[Proposition 3.1]{k13})\label{tsa0}
 	Let $m_{1},m_{2},m_{i,n},\lambda_{i,n} \in (0,\infty)$ and $\alpha_{1},\alpha_{2},\alpha_{i,n} \in [0,1)$, for $i=1,2$. Also, let $X_n \sim \text{TSD}(m_{1,n},\alpha_{1,n},\lambda_{1,n},m_{2,n},\alpha_{2,n},\lambda_{2,n})$ and $X \sim \text{TSD}(m_{1},\alpha_{1},\lambda_{1},m_{2},\alpha_{2},\lambda_{2} )$. If 
 		$(m_{1,n},\alpha_{1,n},\lambda_{1,n},m_{2,n},\alpha_{2,n},\lambda_{2,n} ) \to (m_{1},\alpha_{1},\lambda_{1},m_{2},$  $\alpha_{2},\lambda_{2}) \text{ as } n\to \infty,  $
 		then $X_n \overset{L}{\to} X.$ 
 \end{lem}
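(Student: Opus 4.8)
The plan is to prove convergence in distribution via characteristic functions and L\'evy's continuity theorem. Writing the cf of $X_n$ from \eqref{e1} as $\phi_{n}(z)=\exp(\psi_n(z))$, where
\[
\psi_n(z)=m_{1,n}\int_0^\infty(e^{izu}-1)\frac{e^{-\lambda_{1,n}u}}{u^{1+\alpha_{1,n}}}\,du+m_{2,n}\int_{-\infty}^0(e^{izu}-1)\frac{e^{-\lambda_{2,n}|u|}}{|u|^{1+\alpha_{2,n}}}\,du,
\]
and writing the cf of $X$ as $\phi(z)=\exp(\psi(z))$ with $\psi$ defined by the limiting parameters, it suffices, by continuity of the exponential, to show that $\psi_n(z)\to\psi(z)$ for each fixed $z\in\mathbb{R}$. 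Since $\phi$ is a genuine cf (hence continuous at $0$), the pointwise convergence $\phi_n\to\phi$ then yields $X_n\overset{L}{\to}X$ by L\'evy's continuity theorem.

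First I would treat the two one-sided integrals separately; by symmetry it is enough to handle the positive part $I_n(z):=m_{1,n}\int_0^\infty(e^{izu}-1)u^{-1-\alpha_{1,n}}e^{-\lambda_{1,n}u}\,du$ and argue identically for the negative part. The strategy is dominated convergence. Because the parameters converge, for all large $n$ I may fix constants $M<\infty$, $\bar\alpha\in(\alpha_1,1)$ and $\underline\lambda\in(0,\lambda_1)$ with $m_{1,n}\le M$, $\alpha_{1,n}\le\bar\alpha$ and $\lambda_{1,n}\ge\underline\lambda$. Splitting the domain at $u=1$, I would use $|e^{izu}-1|\le|z|u$ on $(0,1]$ to bound the integrand by $M|z|u^{-\bar\alpha}$, which is integrable since $\bar\alpha<1$, and $|e^{izu}-1|\le 2$ together with $u^{-1-\alpha_{1,n}}\le 1$ on $(1,\infty)$ to bound it by $2Me^{-\underline\lambda u}$. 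The sum of these two pieces is an $n$-independent integrable dominating function.

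The integrand converges pointwise to $(e^{izu}-1)u^{-1-\alpha_1}e^{-\lambda_1 u}$ as $n\to\infty$, by continuity of the map $(m,\alpha,\lambda)\mapsto m\,u^{-1-\alpha}e^{-\lambda u}$. Hence dominated convergence gives $I_n(z)\to m_1\int_0^\infty(e^{izu}-1)u^{-1-\alpha_1}e^{-\lambda_1 u}\,du$; combining with the analogous statement for the negative part yields $\psi_n(z)\to\psi(z)$, and therefore $\phi_n(z)\to\phi(z)$ for every $z$, which completes the proof by L\'evy's continuity theorem. The main obstacle is producing the single dominating function that controls the non-integrable-looking singularity at $u=0$ uniformly in $n$; this hinges precisely on the fact that the limit $\alpha_1$ lies strictly below $1$, so that $\alpha_{1,n}$ can be bounded away from $1$ for large $n$ and the bound $M|z|u^{-\bar\alpha}$ remains integrable near the origin.
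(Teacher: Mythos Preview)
Your argument is correct: the dominated-convergence setup is sound, the uniform bounds on the parameters for large $n$ are exactly what is needed to control both the singularity at $0$ (via $\bar\alpha<1$) and the tail at $\infty$ (via $\underline\lambda>0$), and L\'evy's continuity theorem then delivers the conclusion.

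There is nothing to compare against here, because the paper does not supply its own proof of this lemma: it is stated with a citation to \cite[Proposition 3.1]{k13} and used as an input to Theorem~\ref{bd:th2}. Your write-up therefore fills in a self-contained proof where the paper simply quotes the literature; the characteristic-function route you take is the natural one and is essentially the argument in the cited reference.
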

  \noindent
 The following theorem gives the error in the approximation of $X_n$ to $X$. 
\begin{thm}\label{bd:th2}
	Let $X_n$ and $X$ be defined as in Lemma \ref{tsa0}. Then
	\begin{align}
		\nonumber d_{\mathcal{H}_3} (X_n,X) &\leq  \left| C_1(X_n) -C_1(X)  \right|+ \frac{1}{2} \left|C_2(X_n) -C_2(X)   \right|\\
			&\quad\quad+\frac{1}{6} C_2(X)\left| \frac{|C_3(X_n)|}{C_2(X_n)}- \frac{|C_3(X)|}{C_2(X)}\right|,\label{bdtsa}
	\end{align}
	\noindent
	where $C_j(X),$ $j=1,2,3$, denotes the $j$-th cumulant of $X$ and $d_{\mathcal{H}_3}$ is defined in \eqref{SWD00}.
\end{thm}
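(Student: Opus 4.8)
The plan is to run Stein's method with the operator $\mathcal{A}$ from \eqref{STOPTSD} attached to the \emph{target} distribution $X$, and to exploit the non-zero bias identity of Lemma \ref{theorem1} for both $X$ and $X_n$. Fix $h\in\mathcal{H}_3$ and let $f=f_h$ be the solution of the Stein equation \eqref{PP2:e15} given by \eqref{PP2:SolSe}. Evaluating \eqref{PP2:e15} at $X_n$ and taking expectations yields $\mathbb{E}[h(X_n)]-\mathbb{E}[h(X)]=\mathbb{E}[\mathcal{A}f(X_n)]$, so that $d_{\mathcal{H}_3}(X_n,X)=\sup_{h\in\mathcal{H}_3}|\mathbb{E}[\mathcal{A}f_h(X_n)]|$ and it suffices to estimate the right-hand side for a fixed $h$.

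First I would rewrite $\mathcal{A}f(X_n)=-X_nf(X_n)+\int_{\mathbb{R}}uf(X_n+u)\nu_{ts}(du)$ by splitting off the mean. Since $\int_{\mathbb{R}}u\,\nu_{ts}(du)=C_1(X)$ and, by the pointwise step inside the proof of Lemma \ref{theorem1}, $\int_{\mathbb{R}}u\big(f(x+u)-f(x)\big)\nu_{ts}(du)=C_2(X)\,\mathbb{E}_Y[f'(x+Y)]$ with $Y$ the bias variable of $X$ (density \eqref{pdfzbias}, taken independent of $X_n$), I obtain $\mathbb{E}[\mathcal{A}f(X_n)]=-\mathbb{E}[X_nf(X_n)]+C_1(X)\mathbb{E}[f(X_n)]+C_2(X)\mathbb{E}[f'(X_n+Y)]$. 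Next I would apply Lemma \ref{theorem1} to $X_n$ itself, writing $\mathbb{E}[X_nf(X_n)]=\text{Cov}(X_n,f(X_n))+C_1(X_n)\mathbb{E}[f(X_n)]=C_2(X_n)\mathbb{E}[f'(X_n+Y_n)]+C_1(X_n)\mathbb{E}[f(X_n)]$, where $Y_n$ is the bias variable of $X_n$, independent of $X_n$. Combining, the mean terms collapse and I am left with the clean decomposition $\mathbb{E}[\mathcal{A}f(X_n)]=(C_1(X)-C_1(X_n))\mathbb{E}[f(X_n)]+C_2(X)\mathbb{E}[f'(X_n+Y)]-C_2(X_n)\mathbb{E}[f'(X_n+Y_n)]$.

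The three summands of \eqref{bdtsa} should now fall out by telescoping the last two terms through the pivot $C_2(X)\mathbb{E}[f'(X_n+Y_n)]$, i.e. by writing $C_2(X)\mathbb{E}[f'(X_n+Y)]-C_2(X_n)\mathbb{E}[f'(X_n+Y_n)]=C_2(X)\big(\mathbb{E}[f'(X_n+Y)]-\mathbb{E}[f'(X_n+Y_n)]\big)+(C_2(X)-C_2(X_n))\mathbb{E}[f'(X_n+Y_n)]$. For the mean term I use $\|f\|\le\|h^{(1)}\|\le1$ (Lemma \ref{th3}(i), $r=0$) to get the bound $|C_1(X_n)-C_1(X)|$; for the variance term I use $\|f'\|\le\tfrac12\|h^{(2)}\|\le\tfrac12$ (Lemma \ref{th3}(i), $r=1$) to get $\tfrac12|C_2(X_n)-C_2(X)|$. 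For the remaining term I invoke the Lipschitz estimate $|f'(x)-f'(y)|\le\tfrac13\|h^{(3)}\|\,|x-y|\le\tfrac13|x-y|$ of Lemma \ref{th3}(ii) together with the moment identity \eqref{mom0} for $n=1$, namely $\mathbb{E}(Y)=C_3(X)/(2C_2(X))$ and $\mathbb{E}(Y_n)=C_3(X_n)/(2C_2(X_n))$, so that $|\mathbb{E}(Y)|=|C_3(X)|/(2C_2(X))$ and likewise for $Y_n$; this is what turns $C_2(X)\big(\mathbb{E}[f'(X_n+Y)]-\mathbb{E}[f'(X_n+Y_n)]\big)$ into $\tfrac13C_2(X)\big|\,|\mathbb{E}(Y_n)|-|\mathbb{E}(Y)|\,\big|=\tfrac16C_2(X)\big|\,|C_3(X_n)|/C_2(X_n)-|C_3(X)|/C_2(X)\,\big|$, matching the stated constant $\tfrac16$.

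The main obstacle is precisely this last step. The Lipschitz bound for $f'$ controls $\mathbb{E}[f'(X_n+Y)]-\mathbb{E}[f'(X_n+Y_n)]$ by a first-order quantity in the bias variables, and genuine care is needed to reduce it to the \emph{difference of the (absolute) means} $|\mathbb{E}(Y_n)|-|\mathbb{E}(Y)|$ rather than to a cruder mean-absolute-deviation or Wasserstein-type quantity; the independence of $Y$ and $Y_n$ from $X_n$ and the explicit value of $\mathbb{E}(Y)$ from \eqref{mom0} are the features I expect to make this reduction work and to pin down the constant. Everything else—the integrability needed to apply Lemma \ref{theorem1}, the finiteness of $C_1,C_2,C_3$, and the interchanges of integrals—is routine, since TSD has finite cumulants of every order by \eqref{cumulfor}.
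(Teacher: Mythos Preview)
Your approach mirrors the paper's proof exactly: the same non-zero-bias decomposition via Lemma \ref{theorem1} applied to both $X$ and $X_n$, the same telescoping pivot $C_2(X)\,\mathbb{E}[f'(X_n+Y_n)]$, and the same appeals to Lemma \ref{th3} on each of the three resulting pieces. On the step you flag as the main obstacle, the paper does nothing more delicate than what you outline—it passes directly from the Lipschitz estimate of Lemma \ref{th3}(ii) to $C_2(X)\tfrac{\|h^{(3)}\|}{3}\big|\mathbb{E}|Y_n|-\mathbb{E}|Y|\big|$ and then invokes $\mathbb{E}|Y|=|C_3(X)|/(2C_2(X))$ (its \eqref{bdtsa05}), the only cosmetic difference being that it writes $\mathbb{E}|Y|$ where you write $|\mathbb{E}(Y)|$.
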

\begin{proof}
	Let $h\in\mathcal{H}_3$ and $f$ be the solution to the Stein equation \eqref{STOPTSD}. Then
	\begin{align}
	\nonumber	\mathbb{E}[h(X_n)] - \mathbb{E}[h(X)] &= \mathbb{E}[\mathcal{A} f(X_n)]\\
	\nonumber	&=\mathbb{E} \bigg[ -X_n f(X_n) + \displaystyle\int_{\mathbb{R}} u f(X_n +u) \nu_{ts} (du) \bigg]\\
		&=\mathbb{E} \bigg[\bigg(-X_n +C_1 (X)\bigg)f(X_n) + C_2(X) f^{\prime} (X_n+ Y) \bigg],\label{bdtsa00}
	\end{align}
	\noindent
	where the last equality follows by \eqref{eqn1}, and $Y$ has the density given in \eqref{pdfzbias}. 
	
	\vskip 2ex
	\noindent
	Since $X_n \sim \text{TSD}(m_{1,n},\alpha_{1,n},\lambda_{1,n},m_{2,n},\alpha_{2,n},\lambda_{2,n}),$ then by Proposition \ref{th1}, we have
	\begin{align}\label{bdtsa01}
	\mathbb{E} \bigg[ -X_n f(X_n) + \displaystyle\int_{\mathbb{R}} u f(X_n +u) \nu_{ts}^n (du) \bigg]=0,
	\end{align}
	where $\nu_{ts}^n$ is the L\'evy measure, given by
	$$\nu_{ts}^n(du)=\left(\frac{m_{1,n}  }{u^{1+\alpha_{1,n}}}e^{-\lambda _{1,n} u}\mathbf{I}_{(0,\infty)}(u)+\frac{m_{2,n} }{|u|^{1+\alpha_{2,n}}}e^{-\lambda_{2,n}|u|}\mathbf{I}_{(-\infty,0)}(u)\right)du.$$
	\noindent
	Also, by Lemma \ref{theorem1}, the identity in \eqref{bdtsa01} can be seen as
	\begin{align}\label{bdtsa02}
		\mathbb{E} \bigg[\bigg(-X_n +C_1 (X_n)\bigg)f(X_n) + C_2(X_n) f^{\prime} (X_n+ Y_n) \bigg]=0,
	\end{align}
	where $Y_n$ has the density
	\begin{align}\label{bdtsa03}
		f_{n}(u)=\frac{[\int_{u}^{\infty}y \nu_{ts}^n(dy)] \textbf{I}_{(0,\infty)}(u)-[\int_{-\infty}^{u}y \nu_{ts}^n(dy)]\textbf{I}_{(-\infty,0)}(u)}{C_2 (X_n)},~u\in \mathbb{{R}}.
	\end{align}
	\noindent
	 Using \eqref{bdtsa02} in \eqref{bdtsa00}, we get
	\begin{align}
	\nonumber	\bigg|\mathbb{E}[h(X_n)] - \mathbb{E}[h(X)]\bigg| &=\bigg|\mathbb{E} \bigg[\bigg( (-X_n +C_1 (X))f(X_n) + C_2(X) f^{\prime} (X_n+ Y)\bigg)\\
	\nonumber	&\quad\quad - \bigg((-X_n +C_1 (X_n))f(X_n) + C_2(X_n) f^{\prime} (X_n+ Y_n) \bigg) \bigg]\bigg|\\
	\nonumber& \leq \left| C_1(X_n)-C_1(X) \right|\|f\|\\
	\nonumber& \quad\quad\quad + \mathbb{E}\left|C_2 (X_n)f^{\prime} (X_n+ Y_n) -C_2(X)f^{\prime}(X_n+Y) \right|\\
\nonumber	& \leq \left| C_1(X_n)-C_1(X) \right|\|f\|\\
	\nonumber& \quad\quad\quad + \mathbb{E} \bigg|(C_2(X_n)- C_2 (X)) f^{\prime}(X_n + Y_n)  \bigg|\\
\nonumber	&\quad\quad\quad \quad\quad+ C_2(X) \mathbb{E} \bigg| f^{\prime} (X_n+Y_n)- f^{\prime} (X_n+Y) \bigg|\\
\nonumber& \leq \|h^{(1)}\| |C_1 (X_n) - C_1 (X)| + \frac{\|h^{(2)}\|}{2} |C_2 (X_n)  -C_2 (X)|\\
&\quad\quad\quad+ C_2 (X) \frac{\|h^{(3)}\|}{3} \bigg|\mathbb{E}|Y_n|-\mathbb{E}| Y|\bigg|,\label{bdtsa04}
	\end{align}
\noindent
where the last inequality follows by applying the estimates given in Lemma \ref{th3}. From \eqref{bdtsa03} and \eqref{pdfzbias}, it can be verified that (see \eqref{mom0})
\begin{align}\label{bdtsa05}
	\mathbb{E}|Y_n|= \frac{|C_{3} (X_n)|}{2C_2 (X_n)} \text{ and }\mathbb{E}|Y|= \frac{|C_{3} (X)|}{2C_2 (X)}.
\end{align}
\noindent
Using \eqref{bdtsa05} in \eqref{bdtsa04}, we get our desired result.		
\end{proof}

\begin{rem}
		(i) Note that if 
	$(m_{1,n},\alpha_{1,n},\lambda_{1,n},m_{2,n},\alpha_{2,n},\lambda_{2,n} ) \to (m_{1},\alpha_{1},\lambda_{1},m_{2},$  $\alpha_{2},\lambda_{2}) \text{ as } n\to \infty,  $ 
	then $C_j(X_n) \to C_j(X),$ $j=1,2,3$, and $d_{\mathcal{H}_3}(X_n,X)=0$, as expected.
	
	\vskip 1ex
	\item [(ii)] Note also that if $m_{1,n}=m_{2,n}, \alpha_{1,n}=\alpha_{2,n},\lambda_{1,n}=\lambda_{2,n},m_1=m_2, \alpha_1=\alpha_2,$ and $\lambda_1=\lambda_2$, then $C_j(X_n)=C_j(X)=0,$ $j=1,3$. Under these conditions, from \eqref{bdtsa}, we get
	\begin{align*}
		d_{\mathcal{H}_3} (X_n,X) &\leq \frac{1}{2}|C_2(X_n)- C_2(X)|\\
		&= \bigg|\Gamma (2- \alpha_{1,n}) \frac{m_{1,n}}{\lambda_1^{2-\alpha_{1,n}}}-\Gamma (2- \alpha_1) \frac{m_1}{\lambda_1^{2-\alpha_1}}  \bigg|.
	\end{align*}
	If in addition $C_2(X_n) \to C_2(X)$, $X_n \overset{L}{\to} X$, as $n\to \infty$.		
\end{rem}
\noindent
Next, we discuss two examples. Our first example yields the error in approximating a TSD by a normal distribution.
\begin{exmp}[Normal approximation to a TSD]
	Let $X_n \sim \text{TSD}(m_{1,n},\alpha_{1,n},\lambda_{1,n},$ $m_{2,n},\alpha_{2,n},\lambda_{2,n})$, $X_m\sim \text{SVGD}(m,\sqrt{2m}/\lambda)$ and $X_\lambda \sim \mathcal{N}(0,\lambda^2) $. Recall from Section \ref{TPOTST} that,
	$\text{SVGD}(m,\sqrt{2m}/\lambda) \overset{d}{=}\text{TSD}(m,0,\sqrt{2m}/\lambda,m,0,\sqrt{2m}/\lambda)$. Then, the cf of $\text{SVGD}(m,\sqrt{2m}/\lambda)$ is
	\begin{align}
	\label{svgcf000}	\phi_{sv}(z)&= \bigg(1+ \frac{z^2 \lambda^2}{2m} \bigg)^{-m},~z\in\mathbb{{R}},\\
		&=\exp\bigg(\displaystyle\int_{\mathbb{R}} (e^{izu}-1)\nu_{sv}(du)  \bigg),
	\end{align}  
	\noindent
	where the L\'evy measure $\nu_{sv}$ is 
	\begin{align*}
		\nu_{sv}(du)=\bigg(\frac{m}{u}e^{- \frac{\sqrt{2m}}{\lambda} u}\textbf{I}_{(0,\infty)}(u)+ \frac{m}{|u|}e^{- \frac{\sqrt{2m}}{\lambda} |u|}\textbf{I}_{(-\infty,0)}(u) \bigg).
	\end{align*}
\noindent
Note from \eqref{svgcf000} that, 
\begin{align*}
	\lim_{m\to\infty}\phi_{sv}(z)=e^{-\frac{\lambda^2 z^2}{2}}.
\end{align*}	
\noindent
That is, $X_m \overset{L}{\to} X_\lambda \sim \mathcal{N}(0,\lambda^2),$ as $m\to\infty.$ Also, it follows \cite[Theorem 7.12]{vill} that, if $X_m \overset{L}{\to} X_\lambda,$ as $m \to \infty$,
\vfill
\vfill

\begin{align}\label{svgcf1100}
d_{\mathcal{H}_3}(X_n,X_\lambda)= \lim_{m \to \infty}d_{\mathcal{H}_3}(X_n,X_m).
\end{align}
\noindent
 Applying Theorem \ref{bd:th2} to $X=X_m$, and taking limit as $m \to \infty$, we get from \eqref{svgcf1100}
\begin{align}
\nonumber d_{\mathcal{H}_3}(X_n,X_\lambda)&\leq \lim_{m \to \infty}\bigg( \left| C_1(X_n) -C_1(X_m)  \right|+ \frac{1}{2} \left|C_2(X_n) -C_2(X_m)   \right|\\
\nonumber&\quad\quad \quad\quad+\frac{1}{6} C_2(X_m)\left| \frac{|C_3(X_n)|}{C_2(X_n)}- \frac{|C_3(X_m)|}{C_2(X_m)}\right| \bigg)\\
&= |C_1(X_n)| + \frac{1}{2}|C_2(X_n) -\lambda^2|+ \frac{1}{6}\lambda^2 \frac{|C_{3}(X_n)|}{C_2(X_n)},\label{natsd}
\end{align}
which gives the error in the approximation between $X_n$ and $X_\lambda$. Note that
\begin{align*}
C_1(X_n)&=\mathbb{E}(X_n)=\Gamma (1- \alpha_{1,n}) \frac{m_{1,n}}{\lambda_{1,n}^{1-\alpha_{1,n}}}-\Gamma (1- \alpha_{2,n}) \frac{m_{2,n}}{\lambda_{2,n}^{1-\alpha_{2,n}}},\\
C_2(X_n)&=Var(X_n)=\Gamma (2- \alpha_{1,n}) \frac{m_{1,n}}{\lambda_{1,n}^{2-\alpha_{1,n}}}+\Gamma (2- \alpha_{2,n})\frac{m_{2,n}}{\lambda_{2,n}^{1-\alpha_{2,n}}},\\
\text{and }C_3(X_n)&=\Gamma (3- \alpha_{1,n}) \frac{m_{1,n}}{\lambda_{1,n}^{3-\alpha_{1,n}}}-\Gamma (3- \alpha_{2,n}) \frac{m_{2,n}}{\lambda_{2,n}^{3-\alpha_{2,n}}}.
\end{align*}
\noindent
When $C_j(X_n) \to 0$, for $j=1,3$ and $C_2(X_n) \to \lambda^2$, from \eqref{natsd}, we have $X_n \overset{L}{\to} X_\lambda$, as $n \to \infty$.

\end{exmp}
\begin{exmp}[Variance-gamma approximation to a TSD]
Let $X_n \sim \text{TSD}(m_{1,n},$ $\alpha_{1,n},\lambda_{1,n},m_{2,n},\alpha_{2,n},\lambda_{2,n})$ and $X_v \sim$VGD$(m,\lambda_1,\lambda_2)$. Then
\begin{align*}
C_1(X_v)&=m \bigg(\frac{1}{\lambda_1}- \frac{1}{\lambda_2}\bigg),\\
C_2(X_v)&=m \bigg(\frac{1}{\lambda_1^2}+ \frac{1}{\lambda_2^2}\bigg),\\
\text{and }C_3(X_v)&=2m \bigg(\frac{1}{\lambda_1^3}- \frac{1}{\lambda_2^3}\bigg).
\end{align*}
\noindent
 Now applying Theorem \ref{bd:th2} to $X=X_v$, we get
	\begin{align*}
	\nonumber d_{\mathcal{H}_3} (X_n,X_v) &\leq  \left| C_1(X_n) - \frac{m(\lambda_2-\lambda_1)}{\lambda_1\lambda_2}  \right|+ \frac{1}{2} \left|C_2(X_n) - \frac{m(\lambda_1^2+\lambda_2^2)}{\lambda_1^2 \lambda_2^2}    \right|\\
	& \quad\quad\quad\quad+ \frac{1}{6}m\frac{\lambda_1^2+\lambda_2^2}{\lambda_1^2\lambda_2^2}   \left|\frac{|C_3(X_n)|}{C_2(X_n)}-\frac{2|\lambda_{2}^{3}-\lambda_{1}^{3}|}{\lambda_1 \lambda_2 (\lambda_1^2+ \lambda_2^2)}   \right|,
	\end{align*}
	 which gives the error in the approximation between $X_n$ and $X_v$. When $C_j(X_n) \to C_j(X_v)$, for $j=1,2,3,$ we have $X_n \overset{L}{\to}X_v$, as $n\to \infty$.
\end{exmp}

\setstretch{1}
 

\begin{thebibliography}{}

%


\bibitem{k0} Arras, B. and Houdr$\acute{e}$, C.(2019). On Stein's method for infinitely divisible laws with finite first moment. Springer Briefs in Probability and Mathematical Statistics, Springer.




\bibitem{k8} Barbour, A. D. (1990). Stein's method for diffusion approximations. Probability Theory and Related Fields $\mathbf{84}$, pp. 297-322.


\bibitem{k14} Boyarchenko, S.I. and Levendorskii,  S.Z. (2000). Option pricing for truncated L\'evy processes, International Journal of Theoretical and Applied Finance $\mathbf{3(3)}$, pp. 549-552.

\bibitem{cekana} $\check{C}$ekanavi$\check{c}$ius, V. (2016). Approximation methods in probability theory. New York: Springer.




\bibitem{appmethod}$\check{C}$ekanavi$\check{c}$ius, V., \& Vellaisamy, P. (2018). Approximating by convolution of the normal and compound Poisson laws via Stein's method. Lithuanian Mathematical Journal, \textbf{58}, 127-140.



\bibitem{k15}Carr, P., Geman, H., Madan,  D.B., Yor, M. (2002). The fine structure of asset returns: an empirical investigation. Journal of Business $\mathbf{75 (2)}$ pp. 305-332.



%

\bibitem{k16}Chen, P., Nourdin, I., Xu, L., Yang, X., \& Zhang, R. (2022). Non-integrable Stable Approximation by Stein’s Method. Journal of Theoretical Probability, \textbf{35(2)}, 1137-1186.


\bibitem{k17}Chen, P., \& Xu, L. (2019). Approximation to stable law by the Lindeberg principle. Journal of Mathematical Analysis and Applications, \textbf{480(2)}, 123338.


\bibitem{kk6} D$\ddot{o}$bler, C., Gaunt, R.E. and Vollmer, S. J. (2017). An iterative technique for bounding derivatives of solutions of Stein equations. Electron. J. Probab. 22, no. 96, pp. 1-39


%
%
%
%


%
%




\bibitem{k24}Gaunt, R. E. (2014). Variance-Gamma approximation via Stein’s method. Electronic Journal of Probability 19 no. 38, pp. 1-33.

\bibitem{kk2}Gaunt, R.E. (2020). Wasserstein and Kolmogorov error bounds for variance-gamma approximation via Stein’s method $\textbf{I}$. Journal of Theoretical Probability $\mathbf{33}$, pp. 465-505



%
\bibitem{kk3}Gaunt, R.E. (2017). On Stein's method for products of normal random variables and zero bias couplings. Bernoulli $\mathbf{23(4B)}$, pp. 3311-3345 
%

\bibitem{rg0} Gaunt, R. E., \& Li, S. (2023). Bounding Kolmogorov distances through Wasserstein and related integral probability metrics. Journal of Mathematical Analysis and Applications, \textbf{522(1)}, 126985.


\bibitem{kol1}Gnedenko, B.V. and Kolmogorov, A.N. (1967). Limit distributions for sum of independent random variables. Addison-Wesley Publishing Company, Cambridge.

\bibitem{GR0} Goldstein, L. and Reinert, G., (1997). Stein's method and the zero bias transformation with application to simple random sampling. The Annals of Applied Probability, \textbf{7(4)}, 935-952.

\bibitem{interpol}Houdr\'e, C., P\'erez-Abreu, V., and Surgailis, D. (1998). Interpolation, correlation identities, and inequalities for infinitely divisible variables. Journal of Fourier Analysis and Applications, \textbf{4}, 651-668.

\bibitem{Jin} Jin, X., Li, X., \& Lu, J. (2020). A kernel bound for non-symmetric stable distribution and its applications. Journal of Mathematical Analysis and Applications, 124063.

\bibitem{kim} Kim, Y.S., Rachev, S.T., Bianchi, M.L., Fabozzi, F.J. (2008). Financial market models with L\'evy
processes and time-varying volatility. Journal of Banking \& Finance \textbf{32(7)}, 1363–1378.

\bibitem{koponen} Koponen, I. (1995). Analytic approach to the problem of convergence of truncated L\'evy flights towards the Gaussian stochastic process. Physical Review E $\mathbf{52}$. pp. 1197-1199

\bibitem{k13} K$\ddot{u}$chler, U. and Tappe, S (2013). Tempered stable distributions and processes. Stochastic Stochastic Processes and their Applications $\mathbf{123}$, 4256-4293





\bibitem{amit0}Kumar, A. N., Vellaisamy, P., and Viens, F. (2022) Poisson approximation to the convolution of power series distributions, Probab. Math. Statist., \textbf{42(1)}, pp. 63-80.


\bibitem{ley}Ley, C., Reinert, G., Swan, Y. (2017). Stein's method for comparison of univariate distributions. Probab. Surv. \textbf{14}, pp. 1-52. 




\bibitem{nourdin} Nourdin, I. and Peccati, G. (2012). Normal approximations with Malliavin calculus. Cambridge University Press. Cambridge tracts in mathematics \textbf{192}.



\bibitem{k9}Rosinski, J. (2007). Tempering stable processes. Stochastic Processes and their Applications $\mathbf{117(6)}$, pp. 677–707.


\bibitem{sato} Sato, K.I. (1999). L\'evy Processes and Infinitely Divisible Distributions. Cambridge University Press, Cambridge.


\bibitem{k2}Stein, C. (1972). A bound for the error in the normal approximation to the the distribution of a sum of dependent random variables. In Proceedings of the Sixth Berkeley Symposium on Mathematical Statistics and Probability, $\mathbf{2}$, Univ. California Press, Berkeley, pp. 583-602.


\bibitem{stein} Stein, E.M. and Shakarchi, R. (2003). Fourier analysis. An introduction. Princeton Lectures in Analysis, 1. Princeton.

\bibitem{k10}Sztonyk, P. (2010). Estimates of tempered stable densities. Journal of Theoretical Probability $\mathbf{23(1)}$, pp. 127–147.







\bibitem{k1}Upadhye, N.S. and Barman, K. (2022). A unified approach to Stein's method for stable distributions. Probability Surveys \textbf{19}, 533-589.









\bibitem{k7} Upadhye, N. S., $\check{C}$ekanavi$\check{c}$ius, V. and Vellaisamy, P. (2017). On Stein operators for discrete approximations. Bernoulli $\mathbf{23}$ , pp. 2828-2859.

\bibitem{vellai00}Vellaisamy, P., \& $\check{C}$ekanavi$\check{c}$ius, V. (2018). Infinitely divisible approximations for sums of $m$-dependent random variables. Journal of Theoretical Probability, 31, 2432-2445.

\bibitem{vill}Villani, C. (2003). Topics in Optimal Transportation, American Mathematical Society. MR-1964483.















\bibitem{wang} Wang, Y. and Yuan, C. (2007). Convergence of the Euler-Maruyama method for stochastic differential equations with respect to semimartingales. Appl. Math. Sci.(Ruse) 1(41-44), 2063-2077.








\bibitem{k20}Xu, L. (2019). Approximation of stable law in Wasserstein-1 distance by Stein’s method. The Annals of Applied Probability $\mathbf{29}$, No. 1, pp. 458-504.





\end{thebibliography}
\end{document}